\pdfoutput=1

\documentclass[reqno]{amsart}

\usepackage[latin1]{inputenc}

\usepackage{microtype}
\usepackage{amssymb}
\usepackage{amsmath}
\usepackage{amscd}
\usepackage{amsthm}
\usepackage{amsfonts}
\usepackage{enumerate}
\usepackage{graphicx}
\usepackage{url}
\usepackage[breaklinks=true]{hyperref}
\usepackage{amssymb}
\usepackage[dvips]{color}
\usepackage{epsfig}
\usepackage{mathrsfs}

\newtheorem{theorem}{Theorem}[section]
\newtheorem{lemma}[theorem]{Lemma}

\newtheorem{corollary}[theorem]{Corollary}

\theoremstyle{definition}

\newtheorem{definition}[theorem]{Definition}

\theoremstyle{remark}

\newtheorem{remark}[theorem]{Remark}
\numberwithin{equation}{section}

\date{}
\author{Jon Schneider}
\title{Polynomial sequences of binomial-type arising in graph theory}
\begin{document}
\maketitle

\begin{abstract}
In this paper, we show that the solution to a large class of ``tiling'' problems is given by a polynomial sequence of binomial type. More specifically, we show that the number of ways to place a fixed set of polyominos on an $n\times n$ toroidal chessboard such that no two polyominos overlap is eventually a polynomial in $n$, and that certain sets of these polynomials satisfy binomial-type recurrences. We exhibit generalizations of this theorem to higher dimensions and other lattices. Finally, we apply the techniques developed in this paper to resolve an open question about the structure of coefficients of chromatic polynomials of certain grid graphs (namely that they also satisfy a binomial-type recurrence).
\end{abstract}

\section{Introduction}

A sequence $p_0 = 1, p_1, p_2, \dots$ of polynomials is a polynomial sequence of binomial type if it satisfies the identity

\begin{equation}
\left(\sum_{i=0}^{\infty}p_i(1)x^i\right)^n = \sum_{i=0}^{\infty}p_i(n)x^i
\end{equation}

Binomial-type sequences were introduced by Rota, Kahaner, and Odlyzko in 1975 \cite{RKO} and play an important role in the theory of umbral calculus. Outside of the context of umbral calculus, polynomial sequences of binomial type possess the useful property that they are completely determined by the sequence of their values when evaluated at a single point. Several important polynomial sequences, such as the Abel polynomials and the Touchard polynomials, are of binomial type.

In this paper we demonstrate that polynomial sequences of binomial type arise from a large class of problems occuring in graph theory. In particular, these sequences occur in problems where we wish to enumerate the number of ways to place some objects on a ``toroidal'' periodic structure such that no two overlap. One simple example of this phenomenon is the following. Take an $n \times n$ chessboard and identify opposite edges to make it toroidal. If we let $p_k(n^2)$ equal the number of ways to place $k$ dominoes on this grid (aligned with the grid's edges) such that no two dominoes overlap, then it turns out that, for sufficiently large $n$ the values $p_k(n^2)$ are given by a polynomial in $n^2$. Moreover, this sequence of polynomials (viewed as a sequence in $k$) is a polynomial sequence of binomial type.

Our methods allow us to easily generalize these results. The main result of our paper is a generalization of the above phenomenon to arbitrary sets of polyominos on toroidal grids of any dimension. We also demonstrate some interesting further generalizations of this result; for example, we show that the same binomial-type relation holds when we can assign arbitrary integer weights to polyominos and then count placements that have a total weight of $k$. In addition, we show that there is a very natural continuous analogue of these results concerning placing arbitrary bounded measurable `shapes' in a continous $d$-dimensional torus. 

Finally, we apply these results to provide a proof of an open problem due to Stanley \cite{St86} concerning coefficients of the chromatic polynomial $\chi_n(x)$ of the two-dimensional toroidal grid graph. Much previous research into the chromatic polynomial $\chi_n(x)$ of toroidal grid graphs focused primarily on the asymptotics of this polynomial, particularly the limit $\lim_{n \rightarrow \infty} (\chi_n(x))^{1/n^d}$ (see, for instance \cite{Ba71, Ba82, Bi75, CS02, CS04, KE79, Lieb, Na71}). For example, it is known that for $d = 2$ and $x = 3$, this limit is equal to $(4/3)^{3/2}$.

The open problem due to Stanley asks to show that the coefficient of $x^{n^2-k}$ in $\chi_n(x)$ is (up to sign) for sufficiently large $n$ a polynomial in $n^2$, and that the polynomials for different $k$ form a polynomial sequence of binomial type. By using Whitney's broken-circuit theorem to reduce this to an overlap problem of the style above, we resolve this open problem (and in fact, provide a generalization that holds for chromatic polynomials of toroidal grid graphs of any positive dimension). 

Our paper is structured as follows. In Section \ref{definitions}, we define some terminology that we use throughout this paper. In Section \ref{polynomial}, we prove that the problem of enumerating the number of non-overlapping placements does in fact give rise to a polynomial for sufficiently large $n$. We additionally show how to write these polynomials in a nice form reminiscent of certain generating functions. In Section \ref{ischemes}, we introduce the notion of an intersection schema and use it to prove our main theorem. In Section \ref{generalizations}, we discuss generalizations of our main result to the cases of assigning integer weights of polyominoes, continous tori, non-toroidal grids, and other types of lattices. Finally, in Section \ref{chrom}, we apply our main result along with Whitney's broken-circuit theorem to solve the open problem mentioned above. 

\section{Background and Definitions}\label{definitions}

We begin with some graph-theoretic notation. We let $C_n$ denote the cycle graph on $n$ vertices, $P_n$ denote the path graph on $n$ vertices, and $C_{\infty}$ denote the doubly infinite path graph. 

\begin{definition}
Given two graphs $G_1$ and $G_2$, we define the \textit{product graph} $G_1 \times G_2$ as follows. The vertices of $G_1 \times G_2$ are given by ordered pairs $(v_1, v_2)$ where $v_1 \in G_1$ and $v_2 \in G_2$. The two vertices $(v_1, v_2)$ and $(v'_1, v'_2)$ are adjacent if either $v_1 = v'_1$ and $v_2$ is adjacent to $v'_2$ in $G_2$ or $v_2 = v'_2$ and $v_1$ is adjacent to $v'_1$ in $G_1$. We write $G^r$ for the expression $G \times G \times \dots \times G$ (with $r$ copies of $G$).
\end{definition}

\begin{definition}
The \textit{$d$-dimensional toroidal grid graph of size $n$}, $T^{d}_n$, is the graph $(C_n)^d$. Similarly, the \textit{$d$-dimensional infinite toroidal grid graph} $T^{d}_{\infty}$ is the graph $(C_{\infty})^d$, and the \textit{$d$-dimensional grid graph of size $n$}, $L^{d}_n$, is the graph $(P_n)^d$.
\end{definition}

\begin{definition}
A \textit{$d$-dimensional figure} is a finite subset of vertices of $T^{d}_{\infty}$, up to translation. That is, two figures are considered equivalent if we can get from one to the other by adding a fixed integer vector to the coordinates of each of its vertices. We say a figure is of size $s$ if it contains $s$ vertices. We also say a figure is of girth $g$ if the maximum coordinate difference between two vertices in the figure is equal to $g$.
\end{definition}

We have defined figures above as subsets of $T^{d}_\infty$. However, it is clear that, for any specific figure, if $n$ is large enough (in particular, larger than the girth of the figure), then we can also view the figure as a subset of $T^{d}_n$ (up to translation). We will often abuse notation in this way by talking about ``placing'' figures on $T^{d}_n$. In such cases, we will always assume that we are taking $n$ large enough so that this makes sense. 

We next define what it means for a sequence of polynomials to be of binomial type.

\begin{definition}
The sequence of polynomials $\lbrace p_i(n)\rbrace_{i\geq 0}$ is of \textit{binomial-type} if it satisfies the following three properties: i. $p_0(n) = 1$, ii. $p_i(0) = 0$ for $i>0$, and iii. the identity given by equation (\ref{eqbintype}) below holds for all nonnegative $n$. 

\begin{equation}\label{eqbintype}
\left(\sum_{i=0}^{\infty}p_i(1)x^i\right)^n = \sum_{i=0}^{\infty}p_i(n)x^i
\end{equation}

An equivalent reformulation of our third condition is that the identity given by equation (\ref{eqbintype2}) below holds for all nonnegative $n$.

\begin{equation}\label{eqbintype2}
p_n(x+y) = \sum_{i=0}^{n}p_i(x)p_{n-i}(y)
\end{equation}

\end{definition}

Our definition of binomial-type differs slightly from the definition most often found in the literature, where equation (\ref{eqbintype2}) contains an additional factor of $\binom{n}{i}$. The two definitions are easily interchangeable, however; if $\lbrace p_i(n)\rbrace$ is a sequence of binomial-type under our definition, then $\lbrace i!p_i(n) \rbrace$ is a sequence of binomial-type under the traditional definition.

\section{Polynomiality}\label{polynomial}

In this section, we demonstrate that several sequences related to the number of ways to place a fixed set of figures on a lattice are eventually described by polynomials. More specifically, we have the following main result.

\begin{theorem}\label{polymain}
Let $S$ be a finite multiset of $d$-dimensional figures. Let $f_{S}(n)$ be the number of ways to place all of the figures in $S$ on $T^{d}_n$ such that none overlap (for a finite set of small values of $n$, there may be figures that are impossible to place on $T^{d}_n$; in this case, let $f_{S}(n) = 0$). Then there exists a positive integer $n_0$ and an integer polynomial $p(x)$ such that $f_{S}(n) = p(n^d)$ for all $n \geq n_0$. 
\end{theorem}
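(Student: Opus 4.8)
The plan is to use inclusion–exclusion on the overlap structure. Fix the finite multiset $S$ and label its figures $F_1, \dots, F_m$. A placement of all figures is a choice of translate for each $F_j$ in $T^d_n$; there are $n^d$ choices per figure, so $(n^d)^m$ placements in total, but we must subtract those where some pair of figures overlaps. For each pair $\{j, k\}$, the set of placements in which $F_j$ and $F_k$ overlap depends only on the \emph{relative} offset between their two anchor points, and the set of ``bad'' relative offsets is a fixed finite set (of size bounded in terms of the girths of $F_j$ and $F_k$), independent of $n$ once $n$ exceeds all girths. Thus I would write $f_S(n)$ as an alternating sum over subsets $B$ of the set of (unordered) pairs, where the term for $B$ counts placements in which every pair in $B$ overlaps.

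The key step is to show each such term is eventually a polynomial in $n^d$. Interpret $B$ as a graph on the vertex set $\{1, \dots, m\}$. Within each connected component of $B$, once we fix the anchor of one figure, the ``overlap'' constraints for the edges pin down the anchors of the other figures in that component up to a finite number of choices (a finite union of relative-offset patterns), and crucially this finite count does \emph{not} depend on $n$ for $n$ large — because overlap is a local condition and the torus looks locally like $\mathbb{Z}^d$. Hence the number of placements realizing all overlaps in $B$ equals $(n^d)^{c(B)} \cdot N(B)$ where $c(B)$ is the number of connected components of $B$ and $N(B)$ is an integer constant (for $n \ge n_0$); here $n_0$ can be taken to exceed the girth of every figure in $S$ (or rather, large enough that no wraparound coincidences occur — one must check that distinct offset patterns in $\mathbb{Z}^d$ stay distinct on the torus, which holds once $n$ is larger than a bound depending on the girths). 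Summing over $B$ with signs gives
\[
f_S(n) = \sum_{B} (-1)^{|B|} N(B) (n^d)^{c(B)},
\]
which is a polynomial $p(n^d)$ with integer coefficients, valid for all $n \ge n_0$. For the small cases where some figure cannot be placed at all, $f_S(n) = 0$ by convention, and these are finitely many, so they are absorbed into the choice of $n_0$ (we only claim the polynomial identity for $n \ge n_0$).

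The main obstacle I anticipate is the ``no wraparound coincidences'' bookkeeping: verifying that for $n$ large enough the count $N(B)$ stabilizes. The subtlety is that two different relative-offset vectors in $\mathbb{Z}^d$ could become equal modulo $n$ for small $n$, inflating or deflating the count, and also that an overlap pattern feasible on the torus might not lift to $\mathbb{Z}^d$. Both issues disappear once $n$ exceeds a bound controlled by the maximal girth appearing among the figures in $S$ and the number of figures $m$ (since a connected component of $B$ spans at most $m$ figures, the offsets involved have bounded total displacement). Making this bound explicit — and checking that within a component the solution set of the overlap constraints is genuinely a finite, $n$-independent set of offset tuples once $n$ is large — is the technical heart of the argument; everything else is routine inclusion–exclusion.
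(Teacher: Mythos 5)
Your approach is essentially the paper's: inclusion--exclusion over sets of overlapping pairs, viewing each such set as a graph on the figures, and showing that the term for a graph $B$ equals a constant times $(n^d)^{c(B)}$ because within a connected component the overlap constraints confine all figures to a bounded region, leaving only finitely many relative configurations once $n$ is large. That part is sound and is exactly the content of the paper's Lemmas \ref{polylem1}--\ref{polylem3} and the proof of Theorem \ref{polymainredux}.

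However, there is one genuine gap: the theorem is stated for a \emph{multiset} $S$, and your argument silently assumes the figures are distinguishable. You count a placement as a choice of one translate for each labelled figure $F_1,\dots,F_m$, giving $(n^d)^m$ total placements; but if $S$ contains, say, two indistinguishable copies of the same figure, the number of unordered placements of those two copies is $(n^{2d}+n^d)/2$, not $n^{2d}$ (the extra term comes from the two copies occupying the identical position), and naively dividing your labelled count by symmetry factors is not obviously an integer or even the right answer term by term in the inclusion--exclusion. The paper resolves this with a short additional step: first prove the theorem for distinguishable (coloured) figures exactly as you do, and then observe that in a \emph{non-overlapping} placement no two identical figures can coincide, so each unordered non-overlapping placement is counted exactly $c_1!c_2!\cdots c_m!$ times in the labelled count (where $c_i$ is the multiplicity of the $i$th figure type); hence $f_S(n)$ is the labelled polynomial divided by $c_1!\cdots c_m!$, which is again a polynomial in $n^d$. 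Your write-up needs this reduction (or an equivalent handling of repeated figures) to actually prove the statement as given; everything else matches the paper.
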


Since there is some subtlety in dealing with multisets containing repeated indistinguishable figures, in the first half of this section (Subsection \ref{norepeats}) we prove this result only for sets of distinct figures. In the second half (Subsection \ref{yesrepeats}), we generalize to the case where repeats of figures are allowed.

\subsection{Without repeats}\label{norepeats}

In this subsection, we prove Theorem \ref{polymain} for the case where $S$ contains no repeated figures. In particular, we prove the following simpler result.

\begin{theorem}\label{polymainredux}
Let $S$ be a finite set of \textbf{distinct} $d$-dimensional figures. Let $f_{S}(n)$ be the number of ways to place all of the figures in $S$ on $T^{d}_n$ such that none overlap (for a finite set of small values of $n$, there may be figures that are impossible to place on $T^{d}_n$; in this case, let $f_{S}(n) = 0$). Then there exists a positive integer $n_0$ and an integer polynomial $p(x)$ such that $f_{S}(n) = p(n^d)$ for all $n \geq n_0$. 
\end{theorem}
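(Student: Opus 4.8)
The plan is to count placements by inclusion-exclusion over the ``overlap pattern'' and argue that each resulting term is eventually polynomial in $n^d$. For a fixed set of distinct figures $S = \{F_1, \dots, F_k\}$, consider placing each $F_i$ at some translate (a choice of offset vector in $(\mathbb{Z}/n\mathbb{Z})^d$). The total number of such unconstrained placements is $(n^d)^k$, which is already a polynomial in $n^d$. To count the \emph{non-overlapping} placements, I would group placements according to which figures overlap and, more precisely, according to the \emph{relative} offsets between overlapping figures. Concretely, let me say two placements have the same \emph{overlap type} if, for every pair $(i,j)$ of figures whose images intersect, the difference of their offset vectors is the same integer vector (well-defined once $n$ exceeds the girths of all figures, so that ``wrapping around'' does not create spurious coincidences). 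The key observation is that the overlap type is determined by a bounded amount of combinatorial data independent of $n$: a graph on $\{1,\dots,k\}$ recording which pairs overlap, together with, on each connected component, a consistent assignment of relative offsets drawn from a finite set (bounded in terms of the girths). There are finitely many overlap types, and for $n$ large enough the set of valid overlap types stabilizes.

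Next I would show that, for each fixed overlap type $\tau$, the number of placements realizing exactly $\tau$ is eventually a polynomial in $n^d$. Once the relative offsets within each connected component are pinned down, a placement of that component is determined by the offset of a single representative figure, which is a free choice in $(\mathbb{Z}/n\mathbb{Z})^d$; so if $\tau$ has $c(\tau)$ connected components, there are exactly $(n^d)^{c(\tau)}$ placements realizing that component-structure, \emph{except} that we must subtract off those where additional coincidental overlaps occur — but those are exactly the placements of finer overlap types, so a straightforward inclusion-exclusion (or Möbius inversion over the poset of overlap types, ordered by refinement) expresses the number of placements of \emph{exactly} type $\tau$ as an integer linear combination of the quantities $(n^d)^{c(\sigma)}$ over refinements $\sigma$ of $\tau$. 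Summing the $\tau = \text{(discrete, no overlaps)}$ count then gives $f_S(n)$ as such an integer linear combination, hence $f_S(n) = p(n^d)$ for an integer polynomial $p$ and all $n \geq n_0$, where $n_0$ is chosen larger than all girths in $S$ and large enough that the set of realizable overlap types has stabilized.

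The main obstacle, and the point requiring the most care, is the bookkeeping around \emph{relative} versus \emph{absolute} positions and the torus wraparound: I need the relative-offset data of an overlapping pair to be a genuine integer vector (not just a residue mod $n$) and independent of $n$, which is why I restrict to $n$ exceeding the girths — then any two figures that overlap do so ``locally,'' within a window of bounded size, so their relative displacement is forced to lie in a finite set determined only by the figures. A secondary subtlety is consistency within a connected component: the pairwise relative offsets along different paths in the overlap graph must agree, and an overlap type for which they cannot agree contributes nothing; again this is a finite check independent of $n$. I would also remark that the same argument shows that when a component's ``internal shape'' (the union of the figures at their prescribed relative offsets) has girth at least $n$ it becomes unplaceable, which accounts for the finitely many small $n$ where $f_S(n)$ may differ from $p(n^d)$ and justifies the threshold $n_0$.
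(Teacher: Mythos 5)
Your proposal is correct and follows essentially the same route as the paper: inclusion--exclusion over overlap patterns, combined with the key observation that once $n$ exceeds the relevant girths each connected group of mutually overlapping figures is rigid up to translation, so it contributes $n^d$ translates times a finite, $n$-independent count of internal configurations. The only difference is bookkeeping --- the paper runs inclusion--exclusion over the Boolean lattice of pairs required to overlap (its ``overlap graphs'') and counts placements consistent with each, while you perform M\"obius inversion over the finer poset of overlap types that also record relative offsets, so that each ``at least $\tau$'' count is exactly $(n^d)^{c(\tau)}$.
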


Throughout this subsection and the next, we will repeatedly make use of the following notion of an overlap graph.

\begin{definition}
An \textit{overlap graph} is a graph $G$ whose vertices are labelled by $d$-dimensional figures (for some $d$). A placement of these figures on $T^{d}_n$ (or $T^{d}_{\infty}$) is \textit{consistent} with $G$ if, whenever figures $f_1$ and $f_2$ are adjacent in $G$, they overlap in $T^{d}_n$ (or $T^{d}_{\infty}$). 
\end{definition}

Note that if a placement of figures is consistent with an overlap graph, so are all translations of this placement of figures. This inspires the following definition. 

\begin{definition}
A \textit{configuration} of $d$-dimensional figures is a placement of $d$-dimensional figures on $T^{d}_n$ (or $T^{d}_{\infty}$) where two configurations are equivalent if they are translations of each other in $T^{d}_{n}$ (or $T^{d}_{\infty}$. A configuration $c$ is \textit{consistent} with a graph $G$ if any of its placements are consistent with $G$; in this case, we write $c \unlhd G$.
\end{definition}

If $f_1$ and $f_2$ are not adjacent in $G$, they may or may not overlap in $T_n^{d}$ (or $T^{d}_{\infty}$); only one direction of the above implication holds. In addition, for now we will assume that the vertices of our overlap graphs are labelled with distinct $d$-dimensional figures; we will lift this constraint in the following subsection.

We next prove three useful lemmas about overlap graphs.

\begin{lemma}\label{polylem1}
If an overlap graph $G$ is connected, then, there are only finitely many configurations of these figures on $T^{d}_{\infty}$ consistent with $G$. We call this number $v(G)$.
\end{lemma}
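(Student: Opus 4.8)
The plan is to exploit the rigidity that comes from connectedness: if $G$ is connected, then fixing the placement of one figure constrains the placements of all others to lie within a bounded region. Concretely, I would first set up an arbitrary root vertex $f_0$ of $G$ and, using translation-equivalence of configurations, normalize so that $f_0$ occupies a fixed position (say, so that some chosen vertex of $f_0$ sits at the origin). This is legitimate because a configuration is exactly an equivalence class of placements under translation, so every configuration has a representative with $f_0$ rooted at the origin, and two such rooted representatives are equal as configurations iff they are equal as placements.

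Next I would argue by induction along a spanning tree of $G$, or equivalently by a BFS/DFS from $f_0$, that each figure is confined to finitely many positions. If $f_1$ is adjacent to $f_0$ in $G$, then consistency with $G$ forces $f_1$ to overlap $f_0$ in $T^d_\infty$; since $f_0$ and $f_1$ are finite subsets of vertices, there are only finitely many translates of $f_1$ that share a vertex with the (now fixed) placement of $f_0$ — indeed at most $|f_0| \cdot |f_1|$ of them, since the overlap pins down which vertex of $f_1$ coincides with which vertex of $f_0$. Propagating this along the edges of a spanning tree, each figure adjacent to an already-pinned figure has only finitely many admissible placements, and since $G$ is finite and connected the spanning tree reaches every vertex; hence the total number of rooted placements consistent with $G$ is finite (bounded by a product of terms of the form $|f_i||f_j|$ over tree edges).

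Finally I would note that the number of configurations consistent with $G$ is at most the number of these rooted placements, hence finite; call it $v(G)$. (One should be slightly careful that distinct rooted placements can represent the same configuration only if a translation fixes $f_0$'s position, i.e. only the identity translation in $T^d_\infty$ — so in fact rooted placements and configurations are in bijection, but we only need the inequality.) The same argument works verbatim for $T^d_n$ once $n$ is large enough, though the lemma as stated is about $T^d_\infty$.

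I expect the only real subtlety — not so much an obstacle as a point requiring care — to be the bookkeeping around the torus $T^d_\infty = (C_\infty)^d$, namely verifying that "overlap" genuinely means "share a common vertex" and that translations act freely on $T^d_\infty$ (unlike on $T^d_n$, where translations by multiples of $n$ act trivially), so that rooting at $f_0$ really does kill the translation ambiguity. Everything else is a straightforward finiteness count propagated along a spanning tree of the connected graph $G$.
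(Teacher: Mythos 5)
Your proof is correct and takes essentially the same approach as the paper's: both arguments use connectedness to force every figure into a bounded neighborhood of a chosen reference figure and then count placements up to translation. Your spanning-tree/rooting formulation simply makes explicit (with the bound $\prod |f_i|\,|f_j|$ over tree edges) what the paper phrases more loosely as ``finitely many ways to place a finite number of figures in a bounded region of $T^{d}_{\infty}$.''
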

\begin{proof}
Since $G$ is connected, there exists a path of edges of $G$ between any two vertices of $G$. This implies that, for any two figures $f_1$ and $f_2$ in a consistent placement of these figures on $T^{d}_{\infty}$, we can construct a sequence of figures starting at $f_1$ and ending at $f_2$ such that each figure intersects the next figure in the sequence.

Now, since each of the figures has finite size, and since there are a finite number of figures, this implies that the maximum distance (along edges of the graph) between any two points belonging to figures in our placement is bounded. Since there are only finitely many ways to place a finite number of figures in a bounded region of $T^{d}_{\infty}$, this establishes that $v(G)$ is finite.
\end{proof}

\begin{lemma}\label{polylem2}
Let $G$ be a connected overlap graph. Then the number of placements of figures on $T^{d}_{n}$ consistent with $G$ is equal to $v(G)n^d$ for sufficiently large $n$.
\end{lemma}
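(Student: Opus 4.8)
The plan is to leverage Lemma~\ref{polylem1}, which tells us that on $T^d_\infty$ there are exactly $v(G)$ configurations (translation classes of placements) consistent with $G$, and to show that each such configuration ``lifts'' to $T^d_n$ in exactly $n^d$ ways once $n$ is large enough. First I would fix $n$ larger than (say) twice the maximum girth that can occur among figures appearing as vertex labels of $G$; since $G$ is connected and finite, the bounded-region argument in the proof of Lemma~\ref{polylem1} shows every consistent configuration on $T^d_\infty$ is supported in a region whose diameter is bounded by a constant $D$ depending only on $G$, so I would actually want $n > 2D$ or so. For such $n$, the reduction map that sends a point of $T^d_\infty$ to its residue in $T^d_n$ is injective on any region of diameter less than $n$; hence each consistent configuration on $T^d_\infty$, being supported in such a region, maps to a well-defined placement on $T^d_n$, and two figures overlap on $T^d_n$ if and only if they overlap on $T^d_\infty$ (no ``wrap-around'' coincidences or wrap-around overlaps can occur at this scale). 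This gives a consistent placement on $T^d_n$ for each of the $n^d$ translates of each of the $v(G)$ configurations, and I would check these $v(G)n^d$ placements are distinct.

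Conversely, I would show every placement on $T^d_n$ consistent with $G$ arises this way. Given such a placement, connectivity of $G$ forces all figures to lie within graph-distance $D$ of one another in $T^d_n$; since $n > 2D$, this cluster fits inside a ``ball'' in $T^d_n$ that is isometric to a ball in $T^d_\infty$, so I can lift the whole placement to $T^d_\infty$, obtaining a placement there that is still consistent with $G$ (overlaps are preserved under the isometry). That placement represents one of the $v(G)$ configurations, and the choice of lift (equivalently, where the cluster sits in $T^d_n$) accounts for exactly the $n^d$ translational degrees of freedom. Counting configurations-with-a-chosen-translate then yields exactly $v(G)n^d$, establishing the claim. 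A small bookkeeping point: I should confirm that the $n^d$ translates of a single fixed configuration are genuinely distinct as placements on $T^d_n$ — this holds because a figure is nonempty, so translating by a nonzero vector of $(\mathbb{Z}/n)^d$ moves at least one occupied vertex, again using that $n$ exceeds the relevant girths so distinct translation vectors give distinct placements.

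The main obstacle is making the ``locality'' argument fully rigorous: I need a clean statement that, for $n$ past a threshold depending only on $G$, the quotient $T^d_\infty \to T^d_n$ restricts to an overlap-preserving bijection between (a) configurations on $T^d_\infty$ consistent with $G$, decorated by a translate in $(\mathbb{Z}/n)^d$, and (b) placements on $T^d_n$ consistent with $G$. The two subtleties to dispatch are that no spurious overlap is created by wrapping (handled by diameter $< n$) and that no genuine overlap is destroyed (handled because the support fits in a region on which the quotient is injective). Once that bijection is in hand the count $v(G)n^d$ is immediate, and $n_0$ can be taken to be any integer exceeding the diameter bound $D$ (doubled, to be safe) guaranteed by Lemma~\ref{polylem1}.
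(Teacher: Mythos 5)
Your proposal is correct and follows essentially the same route as the paper: embed each of the $v(G)$ configurations from $T^d_\infty$ into $T^d_n$ for $n$ large and count the $n^d$ translates of each. The paper's own proof is just a terser version of this; your extra care about the quotient map being overlap-preserving and injective on the support, and about the converse direction, fills in details the paper leaves implicit.
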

\begin{proof}
By Lemma \ref{polylem1}, we know that there are $v(G)$ distinct consistent configurations of these figures on $T^{d}_{\infty}$. For sufficiently large $n$, it will be possible to embed each of these $v(G)$ configurations in $T^{d}_{n}$. Finally, for each choice of consistent configuration, there are $n^d$ possible translations in $T^{d}_{n}$. Therefore, there are a total of $v(G)n^d$ consistent placements on $T^{d}_n$ for sufficiently large $n$.
\end{proof}

\begin{lemma}\label{polylem3}
Let $G$ be an overlap graph with connected components $G_1, G_2, \dots, G_r$. Then, the number of placements of figures on $T^{d}_n$ consistent with $G$ is equal to $v(G_1)v(G_2)\dots v(G_r)n^{rd}$.
\end{lemma}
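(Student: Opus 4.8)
The plan is to reduce the statement about a disconnected overlap graph $G$ to the connected case already handled in Lemma \ref{polylem2}. The key observation is that a placement of all the figures of $G$ on $T^d_n$ is consistent with $G$ precisely when the restriction of that placement to each connected component $G_i$ is consistent with $G_i$: the edges of $G$ that must be witnessed by overlaps all lie within a single component, so the consistency condition decouples across components. Thus, specifying a consistent placement for $G$ is the same as independently specifying a consistent placement for each of $G_1, \dots, G_r$.

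Concretely, I would argue as follows. First, fix $n$ large enough that Lemma \ref{polylem2} applies simultaneously to each of the finitely many components $G_1, \dots, G_r$, so that the number of placements of the figures of $G_i$ on $T^d_n$ consistent with $G_i$ equals $v(G_i)n^d$. Next, note that the figure set of $G$ is the disjoint union of the figure sets of the $G_i$ (here we are using the standing assumption in this subsection that all figures labelling vertices of the overlap graph are distinct, so there is no ambiguity about which figures ``belong'' to which component, and no interaction forced between components). Since a placement of the full figure set is just a tuple consisting of a placement of each component's figures, and since consistency with $G$ holds if and only if each component's sub-placement is consistent with its $G_i$, the number of placements consistent with $G$ is the product $\prod_{i=1}^{r} v(G_i)n^d = v(G_1)v(G_2)\cdots v(G_r)\, n^{rd}$, as claimed.

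The main subtlety — the step I would be most careful about — is the independence claim across components. One has to be sure that requiring overlaps only within components does not secretly couple the components: a priori the figures of $G_i$ and $G_j$ occupy the same torus $T^d_n$ and could collide, but the definition of ``consistent with $G$'' only constrains pairs of figures that are adjacent in $G$ (all such pairs being intra-component), and places no constraint whatsoever — neither forcing nor forbidding overlap — on figures in different components. Hence the count genuinely factors. I would also remark explicitly that, unlike in Lemma \ref{polylem2}, we do not quotient by translation here (these are placements, not configurations), so each component contributes its own independent factor of $n^d$, accounting for the exponent $rd$. Everything else is a routine product-counting argument, so no real obstacle remains once the decoupling is stated cleanly.
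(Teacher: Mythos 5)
Your proposal is correct and follows essentially the same route as the paper's own proof: decouple the consistency condition across connected components (using the distinctness of the figures and the fact that all edges of $G$ are intra-component), apply Lemma \ref{polylem2} to each component for sufficiently large $n$, and multiply. The paper makes the same independence observation, including the point that non-adjacent figures are neither forced nor forbidden to overlap, so no changes are needed.
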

\begin{proof}
We first note that we can treat all the connected components ``independently''. More specifically, for each $i$ let $P_i$ be a placement of figures on $T^{d}_n$ consistent with $G_i$. Then the placement of figures given by the union of the $P_i$ is a placement consistent with $G$ (and moreover, all consistent placements with $G$ can be written uniquely in such a way). This follows directly from the fact that, since there are no edges between figures belonging to different connected components of $G$, there are also no overlap constraints they must satisfy (in addition to the fact that all of our figures are distinct, so we can identify which connected component of $G$ they must belong to). 

Therefore, since by Lemma \ref{polylem2}, there are $v(G_i)n^{d}$ placements on $T^{d}_n$ consistent with $G_i$  (for sufficiently large $n$), overall there will be

\begin{equation}
\prod_{i=1}^{r}v(G_i)n^{d} = \left(\prod_{i=1}^{r}v(G_i)\right)n^{rd}
\end{equation}

\noindent
placements of figures on $T^{d}_n$ consistent with $G$ (for sufficiently large $n$), as desired.
\end{proof}

With these lemmas, the proof of Theorem \ref{polymainredux} reduces to a straightforward application of the principle of inclusion-exclusion.

\begin{proof}[\textbf{Proof of Theorem~\ref{polymainredux}}]
Label the figures in $S$ as $f_1, f_2, \dots, f_m$. We wish to count the number of placements of these figures on $T^{d}_n$ such that no two figures overlap. Thus, for each pair $1 \leq i < j \leq m$, let $E(i,j)$ be the set of placements of these figures where $f_i$ and $f_j$ intersect, and let $U$ be the set of all placements of these figures. Finally, for convenience of notation, let $\mathcal{P}$ be the set of all pairs $(i,j)$ where $1\leq i < j \leq m$; if $p = (i,j)$, we will also let $E(p)$ stand for $E(i,j)$.

Then, by the principle of inclusion-exclusion, the number of placements where no two figures overlap is equal to

\begin{equation}\label{pieeq}
|U| - \sum_{p_1\in\mathcal{P}}|E(p_1)| + \sum_{p_1, p_2 \in\mathcal{P}}|E(p_1)\cap E(p_2)| - \sum_{p_1,p_2,p_3 \in\mathcal{P}}|E(p_1)\cap E(p_2) \cap E(p_3)| + \cdots
\end{equation}

Hence, to show that this is eventually a polynomial in $n^d$, it suffices to show that each of the individual terms is eventually a polynomial in $n^d$. Now, $|E(p_1)\cap E(p_2) \cap \dots \cap E(p_k)| = |E(i_1, j_1) \cap E(i_2, j_2) \cap \dots \cap E(i_k, j_k)|$ is equal to the number of placements where figure $f_{i_r}$ intersects figure $f_{j_r}$ for each $r$ between $1$ and $k$. But this is simply equal to the number of placements consistent with the overlap graph $G$ which contains an edge between $f_{i_r}$ and $f_{j_r}$ for each $r$ between $1$ and $k$. By Lemma \ref{polylem3}, this is eventually a polynomial in $n^d$.

We also have the term $|U|$, consisting of all possible placements. But this is also just equal to the number of placements consistent with the overlap graph $G$ containing no edges, so once again by Lemma \ref{polylem3}, this is also a polynomial in $n^d$ (in fact, we have that $|U| = (n^{d})^m$). This concludes the proof.
\end{proof}

For a connected overlap graph $G$, let $a(G) = (-1)^{|E|}v(G)$. By substituting values into Equation \ref{pieeq} from Lemma \ref{polylem3}, we have the following corollary.

\begin{corollary}
Let $N = n^d$. Then, for sufficiently large $n$, the function $f_{S}(n)$ defined in Theorem \ref{polymainredux} is equal to

\begin{equation}\label{isform}
\sum_{r = 1}^{m}\sum \dfrac{1}{r!}a(g_1)a(g_2)\cdots a(g_r)N^r
\end{equation}
\noindent
where the inner sum is over all \textbf{ordered} $r$-tuples of connected overlap graphs that union to an overlap graph for the set $S$ (equivalently, the union of the sets of figures corresponding to the vertices of the $g_i$ is equal to the set $S$).
\end{corollary}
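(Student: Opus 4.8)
The plan is to expand the inclusion-exclusion expression from Equation~\ref{pieeq} by first collecting, for each subset of the pairs $\mathcal{P}$ that appears, the overlap graph $G$ it determines, and then regrouping the resulting sum over overlap graphs $G$ for the set $S$. Concretely, a term $|E(p_1) \cap \cdots \cap E(p_k)|$ with all $p_i$ distinct corresponds to the overlap graph $G$ on vertex set $\{f_1, \dots, f_m\}$ with edge set $\{p_1, \dots, p_k\}$, and the sign attached to it in \eqref{pieeq} is $(-1)^k = (-1)^{|E(G)|}$. So the alternating sum \eqref{pieeq} can be rewritten as $\sum_{G} (-1)^{|E(G)|} (\text{number of placements consistent with } G)$, where $G$ ranges over all graphs on vertex set $S$ (one for each subset of $\mathcal{P}$).

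Next I would apply Lemma~\ref{polylem3}: if $G$ has connected components $G_1, \dots, G_r$, then the number of consistent placements is $v(G_1) \cdots v(G_r) n^{rd} = v(G_1) \cdots v(G_r) N^r$ for large $n$. Since $(-1)^{|E(G)|} = \prod_i (-1)^{|E(G_i)|}$, the contribution of $G$ is $\prod_i a(G_i) \cdot N^r$, using the definition $a(G_i) = (-1)^{|E(G_i)|} v(G_i)$. Thus $f_S(n) = \sum_{G} \prod_{i} a(G_i) N^r$, where now the sum is over unordered decompositions of $S$ into connected overlap graphs (which is exactly a graph $G$ on $S$ together with its component structure). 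To pass from this to the stated form \eqref{isform}, I would switch from unordered collections $\{g_1, \dots, g_r\}$ of connected overlap graphs whose vertex sets partition $S$ to \emph{ordered} $r$-tuples: each unordered collection of $r$ distinct components corresponds to exactly $r!$ ordered tuples (the components are distinct as labelled graphs since the figures are distinct and partition $S$, so there is no overcounting subtlety), which accounts for the $\frac{1}{r!}$ factor. Summing over $r$ from $1$ to $m$ (the empty set $S$ would be the $r=0$ degenerate case, but $S$ is assumed nonempty; if $S$ is empty then $f_S(n) = 1$ trivially) then gives exactly \eqref{isform}.

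The main thing to be careful about — and the only real obstacle — is the bookkeeping in the regrouping step: I need to check that every graph $G$ on vertex set $S$ arises from exactly one subset of $\mathcal{P}$ (immediate, since a graph on a fixed labelled vertex set is the same data as a subset of its potential edge set $\mathcal{P}$), and that the sign and the product $\prod_i a(G_i)$ are correctly matched up across the identity $(-1)^{|E(G)|} = \prod_i (-1)^{|E(G_i)|}$. One should also note that ``connected overlap graph'' here permits the single-vertex graph with no edges (corresponding to a figure that is forced to overlap nothing), for which $a = (-1)^0 \cdot 1 = 1$ by Lemma~\ref{polylem1}, so isolated vertices of $G$ contribute correctly; and that the $N^{rd}$ in Lemma~\ref{polylem3} is $N^r$ after the substitution $N = n^d$. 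Everything else is a direct substitution, so once the index set of the sum is identified as ``ordered $r$-tuples of connected overlap graphs whose vertex sets partition $S$,'' the corollary follows.
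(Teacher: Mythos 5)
Your proposal is correct and is exactly the substitution the paper alludes to (the paper offers no written proof beyond "by substituting values into Equation (\ref{pieeq}) from Lemma \ref{polylem3}"): you regroup the inclusion-exclusion sum by the overlap graph determined by each subset of $\mathcal{P}$, apply Lemma \ref{polylem3} componentwise, match the sign via $(-1)^{|E(G)|}=\prod_i(-1)^{|E(G_i)|}$, and account for the $1/r!$ by passing to ordered tuples of the (distinct, since $S$ has distinct figures) components. Your added care about isolated vertices and the $r!$ overcounting is exactly the right bookkeeping, and your reading of the index set as ordered tuples whose vertex sets partition $S$ is the intended one.
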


Note that since $S$ contains only distinct figures, we could easily remove the factor of $1/r!$ in equation (\ref{isform}) and instead sum over all unordered $r$-tuples. However, for reasons to be explained in Section \ref{ischemes}, it is more convenient to write our polynomial in this form.

\subsection{With repeats}\label{yesrepeats}

In the previous section, we proved Theorem \ref{polymain} for the specific case where $S$ contained no repeated indistinguishable figures. In the case that we have several of the same figure, certain details in the above proof (in particular, Lemma \ref{polylem3}) fail to hold. For example, if our set $S$ contains two identical figures, the number of total possible placements is no longer $n^{2d}$ (nor is it $n^{2d}/2!$, since this is not even always an integer). Instead, it is equal to $(n^{2d} + n^{d})/2$; the extra $n^d$ term arises from the fact that our two indistinguishable figures can occupy exactly the same location. 

However, if we have repeated \textit{distinguishable} figures, all of the logic in the previous section continues to hold. For instance, if we have two identical figures, but of which one is colored red and the other blue, then there are again $n^{2d}$ possible placements of these two figures. This observation gives rise to a simple proof of Theorem \ref{polymain}.

\begin{proof}[\textbf{Proof of Theorem~\ref{polymain}}]
Assume that $S$ contains $c_i$ copies of figure $f_i$, for each $1 \leq i \leq m$.

For each group of indistinguishable repeated figures in $S$, ``color'' them to make them distinguishable. Then the proof of Theorem \ref{polymainredux} implies that the number of ways to place these figures on $T^{d}_{n}$ such that no two figures overlap is eventually some polynomial $p(n^d)$ for large enough $n$.

But now, if we ignore the different colors, each configuration where no two figures overlap is counted exactly $c_1!c_2!\dots c_m!$ times (keep in mind that, since figures cannot overlap in these configurations, they cannot occupy exactly the same location). Therefore in the case of indistinguishable repeated figures, the number of placements of these figures such that no two figures overlap is eventually $p(n^d)/(c_1!c_2!\dots c_m!)$, which is also a polynomial in $n^d$.
\end{proof}

For reasons that will be explained in the next section, we would also like to write this polynomial in the same form as equation (\ref{isform}). In order to construct the correct function $a(g)$, we must introduce some more notation.

\begin{definition}\label{Osets}
For any configuration $c$, we can partition the figures of $c$ into $k$ maximal sets $O_i$ such that all the figures in $O_i$ are identical and overlap completely. Then the \textit{weight} $w_c$ of a configuration $c$ is defined to equal $\prod_{i=1}^{k}(o_i!)^{-1}$, where $o_i = |O_i|$. 
\end{definition}

\begin{definition}\label{alphadef}
In a connected overlap graph $G$, assume that there are a total of $c_i$ (vertices labeled with) figures of type $f_i$ for each $1 \leq i \leq m$. Then we define $\alpha(G) = |\mathrm{Aut}(G)|^{-1}\prod_{i=1}^{m}c_i!$, where $\mathrm{Aut}(G)$ is the group of automorphisms of the graph $G$ that preserve labelling (that is, they send vertices labelled with figures of type $f_i$ to vertices labelled with figures of type $f_i$). 
\end{definition}

Note that if we let $H$ be the group of all permutations of the vertices of $G$ which send figures of type $f_i$ to figures of type $f_i$, then $\alpha(G)$ can be equivalently defined as $|H|/|\mathrm{Aut}(G)|$. Similarly, this is just the number of non-isomorphic ways to color each set of $c_i$ figures of type $f_i$ with $c_i$ distinguishable colors. We will make use of this fact in the proof of the following theorem.

\begin{theorem}\label{mainform}

For a connected overlap graph $g$, define

\begin{equation} \label{genA}
a(g) = (-1)^{|E|}\alpha(g)\sum_{c \unlhd g}w_c
\end{equation}
\noindent
where the sum is over all configurations $c$ consistent with $g$. Then, as before, we have that

\begin{equation}\label{isform}
f_{S}(n) = \sum_{r = 1}^{m}\sum \dfrac{1}{r!}a(g_1)a(g_2)\cdots a(g_r)N^r
\end{equation}
\noindent
where the inner sum is over all \textbf{ordered} $r$-tuples of connected overlap graphs that union to an overlap graph for the set $S$.

\end{theorem}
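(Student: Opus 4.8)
The plan is to reduce to the distinct‑figures case already established (the Corollary following Theorem~\ref{polymainredux}) via the colouring device from the proof of Theorem~\ref{polymain}. Suppose $S$ contains $c_i$ copies of the figure $f_i$, and let $\widetilde S$ be the set obtained by colouring the $c_i$ copies of each $f_i$ with $c_i$ distinct colours, so that $\widetilde S$ consists of distinct figures. The distinct‑figures version of the formula then gives $f_{\widetilde S}(n)=\sum_{r\geq 1}\frac{1}{r!}\sum \widetilde a(\widetilde g_1)\cdots\widetilde a(\widetilde g_r)N^r$ for large $n$, where $\widetilde a(\widetilde g)=(-1)^{|E(\widetilde g)|}v(\widetilde g)$ and the inner sum runs over ordered $r$-tuples of connected overlap graphs on $\widetilde S$ whose union is an overlap graph for $\widetilde S$. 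On the other hand, since no two figures can coincide in a non‑overlapping placement, assigning colours sets up a $\big(\prod_i c_i!\big)$-to-one correspondence between non‑overlapping placements of $\widetilde S$ and of $S$, so $f_{\widetilde S}(n)=\big(\prod_i c_i!\big)f_S(n)$.

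I would then pass back to uncoloured overlap graphs by forgetting colours. Forgetting colours sends an ordered tuple $(\widetilde g_1,\dots,\widetilde g_r)$ as above to an ordered tuple $(g_1,\dots,g_r)$ of connected overlap graphs whose union is an overlap graph for $S$, and the key point is to count the fibre over a fixed $(g_1,\dots,g_r)$. Writing $c_{j,i}$ for the number of vertices of type $f_i$ in $g_j$ (so $\sum_j c_{j,i}=c_i$), a colourful preimage is obtained by first distributing, for each $i$, the $c_i$ colours of type $f_i$ among the $r$ components in one of $\prod_i\binom{c_i}{c_{1,i},\dots,c_{r,i}}$ ways, and then choosing for each $j$ independently one of the $\alpha(g_j)$ non‑isomorphic colourings of $g_j$ (the interpretation of $\alpha$ recorded after Definition~\ref{alphadef}); multiplying and cancelling, the fibre has size $\big(\prod_i c_i!\big)\big/\prod_j|\mathrm{Aut}(g_j)|$. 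Moreover $|E(\widetilde g_j)|=|E(g_j)|$, and $v(\widetilde g_j)$ does not depend on which colouring of $g_j$ is used (relabelling colours is a bijection on consistent configurations), so every tuple in the fibre contributes the same product $\prod_j(-1)^{|E(g_j)|}v(\widetilde g_j)$.

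Combining the two identities, the factor $\prod_i c_i!$ cancels and one obtains $f_S(n)=\sum_{r\geq 1}\frac{1}{r!}\sum_{(g_1,\dots,g_r)}\prod_j\dfrac{(-1)^{|E(g_j)|}v(\widetilde g_j)}{|\mathrm{Aut}(g_j)|}N^r$; comparing with Equation~(\ref{genA}) and the definition of $\alpha$ in Definition~\ref{alphadef}, the whole statement reduces to the single ``local'' identity $v(\widetilde g_j)=\big(\prod_i c_{j,i}!\big)\sum_{c\,\unlhd\,g_j}w_c$ for each connected overlap graph $g_j$. This is where the weights of Definition~\ref{Osets} enter: a configuration $c$ consistent with $g_j$ that places $m_{i,p}$ copies of $f_i$ at position $p$ has maximal fully‑overlapping blocks of sizes $m_{i,p}$, hence weight $w_c=\prod_i\prod_p(m_{i,p}!)^{-1}$, and it admits exactly $\prod_i\frac{c_{j,i}!}{\prod_p m_{i,p}!}=\big(\prod_i c_{j,i}!\big)w_c$ distinct colourings, since a colourful configuration over $c$ is determined by the set of colours sitting at each occupied position; summing over $c$ gives $v(\widetilde g_j)$. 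I expect this last counting identity, together with keeping the $\alpha$, automorphism, and $\tfrac1{r!}$ bookkeeping consistent between the coloured and uncoloured pictures, to be the only real work; the geometric content is already contained in Lemmas~\ref{polylem1}--\ref{polylem3} and the distinct‑figures case.
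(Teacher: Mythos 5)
Your proof is correct and is essentially the paper's own argument: both reduce to the distinct-figures case via the colouring device and the relation $f_{\widetilde S}(n)=\bigl(\prod_i c_i!\bigr)f_S(n)$, both use the interpretation of $\alpha(g)$ as the number of non-isomorphic colourings, and both account for the weights $w_c$ through the multinomial count of colour assignments to the maximal fully-overlapping blocks. The only difference is organisational --- the paper slices the double sum by fixing a placement $p$ and totalling its contributions across terms, whereas you fix a tuple $(g_1,\dots,g_r)$ and count the fibre of coloured tuples over it, which isolates the clean local identity $v(\widetilde g)=\bigl(\prod_i c_{j,i}!\bigr)\sum_{c\,\unlhd\, g}w_c$ that the paper leaves implicit.
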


\begin{proof}

Let $p$ be any placement of the figures of $S$ onto $T^{d}_n$. We will compute the number of times this placement is counted in the above sum, and show that this sum reduces to the similar sum that occurs in the case of completely distinguishable figures (as in Theorem \ref{polymainredux} above).  

To do this, for our placement $p$ of the figures in $S$, as in Definition \ref{Osets}, partition $S$ into $k$ maximal sets $O_i$ such that all the figures in $O_i$ are identical and overlap completely (and let $o_i = |O_i|$). In addition, let there be $c_i$ figures of type $f_i$ for each $i$ between $1$ and $m$.

Next, note that by the definition of $\alpha(G)$, for any connected overlap graph $g$, we can write $a(g) = \sum a'(h)$, where the sum is over all graphs $h$ obtained by coloring all the figures of type $f_i$ distinguishably (note that there are $\alpha(g)$ such graphs). Our new function $\alpha'(h)$ is now given just by $\alpha'(h) = (-1)^{|E|}\sum_{c \unlhd h} w_c$ (where for a configuration $c$ to be consistent with $h$, it simply has to be consistent with the original graph $g$). We can therefore write $f_{S}(n)$ as

$$f_{S}(n) = \sum_{r = 1}^{m}\sum \dfrac{1}{r!}a'(h_1)a'(h_2)\cdots a'(h_r)N^r$$
\noindent
where this new sum is over ordered $r$-tuples of these additionally colored overlap graphs $h_i$. Now, let us consider the terms of this sum that contribute to the total count for our placement $p$. Specifically, we are looking at terms where some subset $p_i$ of $p$ is counted in the placements belonging to the term $a'(h_i)N$, and such that the union of all the $p_i$ is $p$. Let us write one such term as

$$t_p = \dfrac{1}{r!}w_{p_1}w_{p_2}\cdots w_{p_r}$$
\noindent
where $w_{p_{i}}$ is the weight of placement $p_i$ (which is the same as the weight of the configuration $c_i$ to which $p_i$ belongs). Now, note that the overlap sets $O_i$ for our overall placement $p$ are distributed among these $r$ placements. Therefore, for each $1 \leq j \leq r$, let $O_{ij}$ be the subset of $O_{i}$ that belongs to subplacement $p_j$, and let $o_{ij} = |O_{ij}|$. Note then that $w_{p_i} = (o_{1i}!o_{2i}!\dots o_{ki}!)^{-1}$. We thus have that

$$t_p = \prod_{i=1}^{k}\prod_{j=1}^{r} (o_{ij}!)^{-1}.$$

Next, note that $o_{i}!\prod_{j=1}^{r} (o_{ij}!)^{-1}$ is the number of ways to split $o_i$ distinguishable colors and for each $j$ assign $o_{ij}$ of these colors to the placement $p_j$. Therefore, whereas $t_p$ was counting the (weighted) number of terms where we could distinguish between identical figures in each connected overlap subgraph, if we multiply all of these terms by $\prod_{i=1}^{k}o_i!$, we will now be counting over terms where we can distinguish among sets of identical figures that overlap completely.

Next, note that since each set $O_i$ contains figures of the same type, the expression 

$$\dfrac{\prod_{i=1}^{m}c_i!}{\prod_{i=1}^{k}o_i!}$$

\noindent
counts the number of ways to, for each $i$ between $1$ and $m$, split $c_i$ distinct colors and assign them to all the sets $O_i$ containing figures of type $f_i$. Therefore, if we again multiply all of these terms by $\prod_{i=1}^{m}c_i!/\prod_{i=1}^{k}o_i!$, we are now counting over terms where we can distinguish between any two figures; in particular, our sum is now exactly the same as it is in the above proof of Theorem \ref{polymain}. 

Altogether, we have multiplied the terms we are considering by a net factor of 

$$\left(\dfrac{\prod_{i=1}^{m}c_i!}{\prod_{i=1}^{k}o_i!}\right)\prod_{i=1}^{k}o_i! = \prod_{i=1}^{m}c_i!$$

But note that this does not depend on the specific placement $p$ we have chosen at all! Hence, we have shown that 

$$f_{S}(n)\prod_{i=1}^{m}c_i! = f_{\bar{S}}(n)$$

\noindent
where $\bar{S}$ is constructed from $S$ by coloring all the figures so that they are distinguishable. As shown above (again in the proof of Theorem \ref{polymain}), $f_{\bar{S}}(n)/\prod_{i=1}^{m}c_i!$ is exactly $f_{S}(n)$, and therefore our proof is complete. 

\end{proof}

\section{Intersection Schemas}\label{ischemes}

In the previous section, we showed that we can write the function $f_{S}(n)$ in the form given in equation (\ref{isform}). In particular, we have that

$$f_{S}(n) = \sum_{r = 1}^{m}\sum \dfrac{1}{r!}a(g_1)a(g_2)\cdots a(g_r)N^r$$

\noindent
where the function $a$ is given as in equation (\ref{genA}).

In this section, we will prove that a large class of functions written in this form give rise to polynomial sequences of binomial type. To do this, we will define an object called an \textit{intersection schema}, which will generalize many of the properties of overlap graphs we encountered in the previous section. 

\begin{definition}
A \textit{weighted set} is a set $S$ (either finite or infinite) along with a weight function $w: S \rightarrow \mathbb{Z}^+$  (from $S$ to the positive integers) such that for any $W$, there are only finitely many elements $x$ of $S$ such that $w(x) \leq W$. 
\end{definition}

\begin{definition}
Given a weighted set $S$, the set of $S$\textit{-labeled graphs} is the set of graphs where each vertex is labelled by an element in $S$. We shall denote this set as $LG(S)$. We define the weight $w(g)$ of an element $g$ of $LG(S)$ to simply be the sum of the weights of its labels. We will also denote the subset of $LG(S)$ consisting of connected $S$-labeled graphs as $LCG(S)$. Note that we consider two graphs in $LG(S)$ to be equivalent if they are equivalent under a graph isomorphism that sends labeled vertices to similarly labeled vertices.
\end{definition}

\begin{definition}\label{ISdef}
An \textit{intersection schema} is a weighted set $S$ along with a function $a: LCG(S) \rightarrow \mathbb{C}$. Given an intersection schema, we define the polynomial $q_i(N)$ as:

$$q_i(N) = \sum_{r = 1}^{\infty}\sum \dfrac{1}{r!}a(g_1)a(g_2)\cdots a(g_r)N^r$$

\noindent
where the inner sum is over all \textbf{ordered} $r$-tuples $(g_1,g_2,\dots,g_r)$ of elements of $LCG(S)$ such that $\sum_{j=1}^{r}w(g_j) = i$. By default, we set $q_0(N) = 1$. 
\end{definition}

Finally, we will need the following binomial identity:

\begin{lemma}\label{binlemma}
We have that

$$n^k = \sum_{m_1+\dots+m_r = k}\dfrac{k!}{m_1!m_2!\cdots m_r!}\binom{n}{r}$$

\noindent
where the sum is over all compositions of $k$.
\end{lemma}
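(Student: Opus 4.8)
The plan is to recognize both sides as counting the same objects — functions from a $k$-element set to an $n$-element set — organized by the size of the image. First I would observe that the left-hand side $n^k$ is exactly the number of functions $\phi\colon [k]\to[n]$. To match this with the right-hand side, I would group these functions according to their image: choosing an $r$-element subset $R\subseteq[n]$ (there are $\binom{n}{r}$ of these) and then counting the surjections $[k]\twoheadrightarrow R$. Thus $n^k = \sum_{r\ge 1}\binom{n}{r}\cdot\operatorname{Surj}(k,r)$, where $\operatorname{Surj}(k,r)$ is the number of surjections from a $k$-set onto an $r$-set.

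The remaining step is to identify $\operatorname{Surj}(k,r)$ with $\sum_{m_1+\dots+m_r=k}\frac{k!}{m_1!\cdots m_r!}$, the sum being over compositions of $k$ into exactly $r$ positive parts. A surjection $\phi$ onto the $r$ labeled elements $\{y_1,\dots,y_r\}$ is the same datum as an ordered partition of $[k]$ into $r$ nonempty blocks $(\phi^{-1}(y_1),\dots,\phi^{-1}(y_r))$; if the block sizes are $m_1,\dots,m_r$ (each $m_j\ge 1$, summing to $k$), the number of such ordered set partitions with those prescribed sizes is the multinomial coefficient $\binom{k}{m_1,m_2,\dots,m_r} = \frac{k!}{m_1!\cdots m_r!}$. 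Summing over all compositions $m_1+\dots+m_r = k$ with positive parts gives $\operatorname{Surj}(k,r)$, and substituting back yields
$$n^k = \sum_{r\ge 1}\binom{n}{r}\sum_{\substack{m_1+\dots+m_r=k\\ m_j\ge 1}}\frac{k!}{m_1!\cdots m_r!} = \sum_{m_1+\dots+m_r = k}\frac{k!}{m_1!m_2!\cdots m_r!}\binom{n}{r},$$
where in the last expression the single sum ranges over all compositions of $k$ (the value of $r$ being read off as the number of parts), as claimed.

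There is no real obstacle here; the only point requiring a word of care is the bookkeeping around the word "composition": a composition of $k$ is an ordered tuple of positive integers summing to $k$, so the number of parts $r$ is determined by the composition, and the outer sum over $r$ together with the inner sum over compositions into exactly $r$ parts is precisely the single sum over all compositions. One could alternatively give a purely algebraic proof by starting from the identity $n^k = \sum_r S(k,r)\,r!\binom{n}{r}$ (Stirling numbers of the second kind) and using the composition formula $r!\,S(k,r) = \sum_{m_1+\dots+m_r=k}\binom{k}{m_1,\dots,m_r}$, but the direct bijective argument above is cleaner and self-contained.
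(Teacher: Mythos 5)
Your proof is correct and matches the paper's argument: both count functions from a $k$-set to an $n$-set (the paper phrases these as colorings), grouped by the set of $r$ values actually used, with the inner sum over compositions into $r$ positive parts enumerating the ordered partitions of $[k]$ into the nonempty preimages. Your write-up is somewhat more careful about the surjection/ordered-set-partition bookkeeping, but the approach is identical.
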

\begin{proof}
The left hand side is the number of ways to color a set of $k$ items with $n$ colors. The right hand side counts this same number; here the $m_i$ correspond to sizes of sets of items colored the same color, the first multinomial coefficient corresponds to the number of ways to distribute the $k$ items into these groups of size $m_i$, and the binomial coefficient corresponds to the number of ways to choose $r$ colors for these $r$ groups out of the total $n$ colors.
\end{proof}

We can now state and prove our main theorem. 

\begin{theorem}\label{mainthm}
Let $\mathcal{I}$ be an intersection schema. We then have:

$$\left(\sum_{i=0}^{\infty}q_i(1)x^i\right)^N = \left(\sum_{i=0}^{\infty}q_i(N)x^i\right)$$
\end{theorem}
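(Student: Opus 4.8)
The plan is to expand both sides of the claimed identity as power series in $x$ and compare coefficients of $x^i$, showing that the coefficient of $x^i$ on the left equals $q_i(N)$ as given in Definition \ref{ISdef}. First I would recall that $q_i(1) = \sum_{r \geq 1}\sum \frac{1}{r!}a(g_1)\cdots a(g_r)$ where the inner sum is over ordered $r$-tuples of elements of $LCG(S)$ of total weight $i$; so $\sum_{i=0}^\infty q_i(1)x^i$ is the exponential-type generating function $\exp\!\big(\sum_{g \in LCG(S)} a(g)x^{w(g)}\big)$, where I treat the formal sum $A(x) := \sum_{g \in LCG(S)} a(g)x^{w(g)}$ as a well-defined power series (this is exactly where the weighted-set condition that only finitely many $g$ have weight $\leq W$ gets used, so each coefficient of $A(x)$ is a finite sum). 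Indeed, the $r$-th term $\frac{1}{r!}A(x)^r$ has $x^i$-coefficient $\frac{1}{r!}\sum a(g_1)\cdots a(g_r)$ over ordered tuples of total weight $i$, matching $q_i(1)$ after summing over $r$, with the $r=0$ term giving $q_0(1)=1$.

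Next I would raise this to the $N$-th power: $\left(\sum_i q_i(1)x^i\right)^N = \exp\!\big(N A(x)\big) = \sum_{r \geq 0}\frac{N^r}{r!}A(x)^r$. Extracting the coefficient of $x^i$ from $\frac{N^r}{r!}A(x)^r$ gives $\frac{N^r}{r!}\sum a(g_1)\cdots a(g_r)$ summed over ordered $r$-tuples $(g_1,\dots,g_r)$ of elements of $LCG(S)$ with $\sum_j w(g_j) = i$. Summing over $r$, this is precisely the definition of $q_i(N)$. Hence the coefficient of $x^i$ on the left side is $q_i(N)$, which is the coefficient of $x^i$ on the right side, proving the identity. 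The identity $\exp(NA) = \exp(A)^N$ for formal power series with zero constant term (here $A(x)$ has zero constant term since every $g \in LCG(S)$ has weight $w(g) \geq 1$, as each vertex has positive weight) is a standard fact I would cite or establish in one line via $\frac{d}{dx}$ or via the binomial series for $(1+u)^N = \exp(N\log(1+u))$.

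One subtlety worth spelling out: I should confirm that the formal manipulations are legitimate, i.e. that $A(x)$ is a genuine element of $\mathbb{C}[[x]]$. The weighted-set axiom guarantees that for each fixed $i$ there are only finitely many $g \in LCG(S)$ with $w(g) = i$ (since $w(g) \leq i$ already forces finiteness), so the $x^i$-coefficient of $A(x)$ is a finite sum and $A(x)$ is well-defined; moreover $A(x) \in x\,\mathbb{C}[[x]]$, so $A(x)^r \in x^r\mathbb{C}[[x]]$ and the sum $\sum_r \frac{N^r}{r!}A(x)^r$ converges $x$-adically, making $\exp(NA(x))$ well-defined for every value of $N$. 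I expect the main (minor) obstacle to be bookkeeping the correspondence between ``ordered $r$-tuples of connected labeled graphs of total weight $i$'' and the multinomial expansion of $A(x)^r$ cleanly — but this is just the statement that the $x^i$-coefficient of $A(x)^r$ is $\sum_{w(g_1)+\dots+w(g_r)=i} a(g_1)\cdots a(g_r)$, which is immediate from the definition of the product of power series. The alternative route via Lemma \ref{binlemma} (expressing $N^r$ through $\binom{N}{k}$'s and compositions) would also work and may be what the authors intend, but the $\exp$ formulation is the most transparent; if a proof avoiding formal power series over a general base is preferred, one can instead verify the equivalent recurrence $q_i(N) = \sum_{r}\frac{1}{r!}\binom{N}{r}(\text{stuff})$ and match it to the binomial-convolution form (\ref{eqbintype2}) using Lemma \ref{binlemma} directly.
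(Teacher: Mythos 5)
Your proof is correct, and it takes a genuinely different route from the paper's. You identify $F(x)=\sum_i q_i(1)x^i$ with $\exp(A(x))$, where $A(x)=\sum_{g\in LCG(S)}a(g)x^{w(g)}$, and then read off the coefficient of $x^i$ in $F(x)^N=\exp(NA(x))=\sum_{r}\frac{N^r}{r!}A(x)^r$; the whole argument is an instance of the exponential formula. The paper never introduces $A(x)$ or the exponential: it treats the $a(g)$ as non-commuting formal variables, observes that the only contributions to the monomial $a(g_1)\cdots a(g_r)x^{w(g_1)+\cdots+w(g_r)}$ in $F(x)^N$ come from splitting the word $a(g_1)\cdots a(g_r)$ into $s$ consecutive blocks of sizes $m_1,\dots,m_s$ distributed among the $N$ factors, and evaluates $\sum_{m_1+\cdots+m_s=r}\frac{1}{m_1!\cdots m_s!}\binom{N}{s}=\frac{N^r}{r!}$ via Lemma \ref{binlemma}; that lemma is the paper's hand-rolled substitute for your identity $\exp(NA)=\sum_r N^rA^r/r!$. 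Your version is shorter and more transparent, and your care about why $A(x)$ lies in $x\,\mathbb{C}[[x]]$ (only finitely many $g\in LCG(S)$ of each weight, because labels come from a weighted set and weights are positive integers) addresses a well-definedness point the paper glosses over. What the paper's block-counting buys is the remark that the argument survives verbatim when $a$ takes values in a non-commutative ring, since the word $a(g_1)\cdots a(g_r)$ is never reordered; your route also extends to that setting, but only after the extra observation that $A(x)$ commutes with itself so $\exp(A)^N=\exp(NA)$ still holds. Either way the underlying computation is the same, and your packaging of it is sound.
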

\begin{proof}
Let

$$F(x) = \sum_{i=0}^{\infty}q_i(1)x^i$$
\noindent
and let

$$F_N(x) = \sum_{i=0}^{\infty}q_i(N)x^i$$.

By the formula for $q_i(n)$ given in Definition \ref{ISdef}, we can rewrite $F(x)$ as:

$$F(x) = 1+\sum_{r=1}^{\infty}\sum \dfrac{1}{r!}a(g_1)a(g_2)\dots a(g_r)x^{w(g_1)+w(g_2)+\dots+w(g_r)}$$
\noindent
where the inner sum is over all ordered $r$-tuples $(g_1, g_2, \dots, g_r)$ of elements of $LCG(S)$.

We will now show by comparing terms that $F(x)^N = F_N(x)$. For sake of convenience, we will assume that the $a(g)$ are arbitrary non-commuting variables; i.e. that we do not necessarily have $a(g_1)a(g_2) = a(g_2)a(g_1)$ (of course, since $a(g_i) \in \mathbb{C}$, they do commute, but we will remove this restriction for now).

A general term in $F_n(x)$ looks like:

$$\dfrac{n^r}{r!}a(g_1)a(g_2)\dots a(g_r)x^{w(g_1)+w(g_2)+\dots+w(g_r)}$$

We will show that the coefficient of $a(g_1)a(g_2)\dots a(g_r)x^{w(g_1)+w(g_2)+\dots+w(g_r)}$ in $F(x)^n$ is also $\dfrac{n^r}{r!}$, thus completing the proof. To see this, first note that since we are assuming the $a(g)$s do not commute, the terms in $F(x)$ which could contribute to this coefficient in $F(x)^n$ are of the form 

$$\dfrac{1}{j!}a(g_i)a(g_{i+1})\dots a(g_{i+j-1})x^{w(g_i)+w(g_{i+1})+\dots +w(g_{i+j-1})}$$
\noindent
(in other words, consecutive blocks of $a(g_i)$s). But now, it follows directly from expansion that the coefficient of 

$$a(g_1)a(g_2)\dots a(g_r)x^{w(g_1)+w(g_2)+\dots+w(g_r)}$$
\noindent
in $F(x)^n$ is equal to

$$\sum_{m_1+\dots+m_s = r}\dfrac{1}{m_1!m_2!\dots m_s!}\binom{n}{s}$$
\noindent
which by Lemma \ref{binlemma} is simply equal to $\dfrac{n^r}{r!}$, as desired. (The ordered partitions arise from the different ways to divide the product $a(g_1)a(g_2)\dots a(g_r)$ into consecutive ``blocks''). 

\end{proof}

\begin{remark}
Interestingly enough, this proof works even if the $a(g)$ variables do not commute, so it holds even when $a$ is a function from $LCG(S)$ to $GL_n(\mathbb{C})$ (or any group). The author has not found any useful applications of this fact, however.
\end{remark}

We can now directly apply this theorem about intersection schemas to the case of non-overlapping placements. 

\begin{theorem}\label{mainresult}
Let $S$ be a (possibly infinite) set of connected $d$-dimensional figures, and let $p_{k}(n^d)$ be the number of ways to place some collection of these figures (possibly using the same figure in $S$ repeatedly) that have a total of $k$ edges on $T^{d}_n$. Then the $p_{k}(n^d)$ are eventually polynomials in $n^d$, and these polynomials form a sequence of binomial type.
\end{theorem}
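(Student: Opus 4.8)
The plan is to realize the sequence $\{p_k(n^d)\}_k$ as the sequence of polynomials $\{q_k(N)\}_k$ attached, via Definition~\ref{ISdef}, to an appropriately chosen intersection schema, and then to read both conclusions off Theorem~\ref{mainthm}. First I would take the weighted set to be $S$ itself, with weight $w(f)$ equal to the number of edges of the figure $f$; here one should assume each figure in $S$ has at least one edge (a single-vertex figure has weight $0$, violating the $\mathbb{Z}^+$-valuedness of the weight function, and would be handled as a separate easy technicality). The weighted-set axiom then holds: a connected figure with at most $W$ edges has at most $W+1$ vertices, and up to translation there are only finitely many connected subsets of $T^d_\infty$ of bounded size, so only finitely many $f\in S$ satisfy $w(f)\le W$. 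For the function $a\colon LCG(S)\to\mathbb{C}$ I would take precisely formula (\ref{genA}), namely $a(g)=(-1)^{|E(g)|}\alpha(g)\sum_{c\unlhd g}w_c$, using that a connected $S$-labeled graph is exactly a connected overlap graph all of whose vertex labels are connected figures drawn from $S$. Let $q_k(N)$ denote the resulting polynomials.

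The core step is to check that $p_k(n^d)=q_k(n^d)$ for all sufficiently large $n$. A non-overlapping placement of a collection of figures with total edge count $k$ uses a well-defined finite multiset $M$ of figures from $S$ with $w(M):=\sum_{f\in M}w(f)=k$ (repeats in $M$ are allowed --- distinct, non-overlapping copies of the same figure --- and Theorem~\ref{mainform} is built precisely to handle this). Hence $p_k(n^d)=\sum_{M:\,w(M)=k}f_M(n)$, and since every contributing figure has between $1$ and $k$ edges, the weighted-set axiom ensures only finitely many such multisets $M$ exist. Choosing $n$ past the finitely many thresholds coming from Theorem~\ref{mainform}, I would substitute that theorem's formula for each $f_M(n)$ and observe that, as $M$ ranges over all multisets of weight $k$, the ordered $r$-tuples of connected overlap graphs that appear are exactly the ordered $r$-tuples $(g_1,\dots,g_r)$ of elements of $LCG(S)$ with $\sum_{j}w(g_j)=k$, each with coefficient $\tfrac{1}{r!}a(g_1)\cdots a(g_r)N^r$. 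This is verbatim the defining sum for $q_k(N)$. Because every $w(g_j)\ge1$, only $r\le k$ can occur, so $q_k$ is genuinely a polynomial in $N$ of degree at most $k$, which establishes the polynomiality assertion.

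For the binomial-type assertion, Theorem~\ref{mainthm} applied to this intersection schema immediately yields $\bigl(\sum_{i}q_i(1)x^i\bigr)^N=\sum_{i}q_i(N)x^i$ for every nonnegative integer $N$, which is identity (\ref{eqbintype}). The two remaining requirements in the definition of binomial type are immediate: $q_0(N)=1$ by the default convention of Definition~\ref{ISdef}, and $q_i(0)=0$ for $i>0$ since every monomial in $q_i$ carries a factor $N^r$ with $r\ge1$. Hence the polynomials $q_k$, which coincide with $p_k(n^d)$ for all large $n$, form a polynomial sequence of binomial type, as claimed.

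I expect the main obstacle to be precisely the bookkeeping of the core step: confirming that summing the Theorem~\ref{mainform} expansion of $f_M(n)$ over all weight-$k$ multisets $M$ reproduces the intersection-schema sum $q_k(N)$ with neither over- nor under-counting. The reason this works is that the configuration weights $w_c$ and the automorphism factor $\alpha(g)$ built into formula (\ref{genA}) are exactly the data the intersection-schema machinery expects, so the identification amounts to matching indices carefully between ``multisets of total weight $k$'' and ``ordered tuples of connected $S$-labeled graphs of total weight $k$''; once that dictionary is in place, both halves of the theorem follow mechanically from Theorems~\ref{mainform} and~\ref{mainthm}.
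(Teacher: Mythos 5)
Your proposal is correct and follows essentially the same route as the paper: it builds the intersection schema on $S$ with weight equal to edge count and $a$ given by formula (\ref{genA}), identifies $p_k(n^d)$ with $q_k(n^d)$ via Theorem \ref{mainform}, and invokes Theorem \ref{mainthm} for the binomial-type identity. Your write-up is in fact more careful than the paper's (which states the identification $p_k = q_k$ without the multiset bookkeeping, and does not flag the zero-edge figure technicality), but the underlying argument is the same.
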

\begin{proof}
Define the following intersection schema. Our weighted set $S$ is just the set $S$ of $d$-dimensional figures, where the weight of a figure is simply its number of edges. The function $a$ is defined as in equation (\ref{genA}); note that graphs in $LCG(S)$ are just connected overlap graphs for some set of figures. Then Theorem \ref{mainform} shows that for each $k$, $p_{k}(n^d)$ is eventually equal to $q_{k}(n^d)$. Our main theorem about intersection schemas (Theorem \ref{mainthm}) then shows that the polynomials $q_{k}(n^d)$ form a sequence of binomial type, as desired.
\end{proof}

\section{Generalizations}\label{generalizations}

Up until now, this paper has been concerned only with placing $d$-dimensional figures on $d$-dimensional toroidal grid graphs. However, the machinery of intersection schemas and inclusion-exclusion on overlap graphs allow us to prove a much wider range of results. In fact, it seems that any problem involving placing finite non-overlapping collections of subgraphs on larger and larger periodic graphs gives rise to an eventual polynomial sequence; in addition, if the underlying periodic graphs are (in some sense) ``toroidal'', then this polynomial sequence is of binomial type. In this section, we will consider some generalizations of Theorems \ref{polymain} and \ref{mainresult} that capture this idea.

\subsection{Other weights}

In the proof of Theorem \ref{mainresult}, we assigned the weight of a figure to be its number of edges. However, since Theorem \ref{mainthm} works for any valid weight function, we can essentially assign whatever weights we want to figures (as long as not too many figures have small weight). We can formalize this in the following statement.

\begin{theorem}\label{mainresultwts}
Let $S$ be a (possibly infinite) set of connected $d$-dimensional figures, and let $w$ be a function from $S$ to $\mathbb{Z}^{+}$ such that for any $x$, there are only finitely many figures $f \in S$ such that $w(f) = x$. Let $p_{k}(n^d)$ be the number of ways to place some collection of these figures (possibly using the same figure in $S$ repeatedly) that have a total of $k$ edges on $T^{d}_n$. Then the $p_{k}(n^d)$ are eventually polynomials, and these polynomials form a sequence of binomial type.
\end{theorem}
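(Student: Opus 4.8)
The plan is to reuse the proof of Theorem \ref{mainresult} essentially word for word, with the ``number of edges'' weight replaced throughout by the given function $w$. First I would check that the pair $(S,w)$ is a weighted set in the sense of Section \ref{ischemes}: since $w$ takes values in $\mathbb{Z}^{+}$, the hypothesis that for each $x$ only finitely many $f\in S$ have $w(f)=x$ immediately gives that for each bound $W$ only finitely many $f\in S$ have $w(f)\le W$. Then I would form the intersection schema $\mathcal{I}=(S,w,a)$, where $a$ is exactly the function of equation (\ref{genA}). The point is that $a(g)$ depends only on the internal combinatorics of the overlap graph $g$ --- its number of edges, its label-preserving automorphism group, and the configurations consistent with it --- and not at all on the weights assigned to figures, so $a$ needs no modification; as in Theorem \ref{mainresult}, the elements of $LCG(S)$ are precisely the connected overlap graphs on figures of $S$.

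Next I would verify that the $q_k(N)$ of Definition \ref{ISdef} attached to $\mathcal{I}$ are genuine polynomials and that $p_k(n^d)$ is eventually equal to $q_k(n^d)$. For the former: every figure has weight $\ge 1$, so any tuple $(g_1,\dots,g_r)$ contributing to $q_k$ has $r\le k$, and by the weighted-set property there are only finitely many connected overlap graphs of weight $\le k$, whence the defining sum is finite. For the latter: a placement of total weight $k$ can only involve figures from the finite set $S_k=\{f\in S : w(f)\le k\}$, so $p_k(n^d)=\sum_{M} f_{M}(n^d)$, the sum being over the finitely many multisets $M$ of figures of $S_k$ with $\sum_{f\in M}w(f)=k$; each $f_{M}$ is an eventual polynomial in $n^d$ of the form given by Theorem \ref{mainform}, and summing these forms over all such $M$ collects exactly the tuples of connected overlap graphs $(g_1,\dots,g_r)$ with $\sum_{j}w(g_j)=k$, i.e. $q_k(n^d)$, for all sufficiently large $n$.

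Finally, Theorem \ref{mainthm} applied to $\mathcal{I}$ yields $\left(\sum_{i}q_i(1)x^i\right)^{N}=\sum_{i}q_i(N)x^i$; together with $q_0=1$ and the fact that every term of $q_i$ for $i>0$ carries a positive power of $N$ (so $q_i(0)=0$), this shows that $\{q_k\}$, and hence the eventual polynomials $\{p_k\}$, form a sequence of binomial type. I do not anticipate a real obstacle: the entire argument is the bookkeeping observation that $w$ enters the inclusion--exclusion and overlap-graph machinery of Sections \ref{polynomial}--\ref{ischemes} only as the index of the generating power series, never in the combinatorial objects themselves. The only point meriting a line of care is the reduction from the possibly infinite set $S$ to the finite set $S_k$ relevant to a fixed $k$, which is immediate from the hypothesis on $w$.
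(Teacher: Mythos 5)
Your proposal is correct and takes exactly the route the paper intends: the paper states this theorem without an explicit proof, remarking only that Theorem \ref{mainthm} works for any valid weight function, and your write-up supplies precisely the omitted bookkeeping (checking the weighted-set condition, reducing to the finite set of figures of weight at most $k$, and matching $p_k$ with $q_k$ via Theorem \ref{mainform}). No gaps; this is the same argument, just written out in full.
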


For example, all the following polynomial sequences are polynomial sequences of binomial-type:

\begin{itemize}
\item
Let $p_{k}(n^2)$ be the number of ways to place $a$ L-shaped triominos and $b$ T-shaped pentominos on an $n \times n$ toroidal grid such that $7a+2b=k$. Then $p_{k}(n^2)$ is eventually a polynomial sequence of binomial type.
\item
Let $p_{k}(n^d)$ be the number of ways to place some number of $d$-dimensional figures on a $d$-dimensional toroidal grid graph such that the sum of the squares of the number of edges over all figures equals $k$. Then $p_{k}(n^d)$ is eventually a polynomial sequence of binomial type.
\item
Let $p_{k}(n^d)$ be the number of ways to place some number of $d$-dimensional figures on a $d$-dimensional toroidal grid graph such that the total number of edges in the figures equals $k$, and then to color each figure that has at least $3$ edges with one of $50$ colors. Then $p_{k}(n^d)$ is eventually a polynomial sequence of binomial type.
\end{itemize}

\subsection{Continuous variant}

We can easily adapt intersection schemas to handle a continuous variant of our problem. To do this, we replace the $d$-dimensional toroidal grid graph $T^{d}_n$ with a continuous $d$-dimensional torus of side length $n$, and the concept of a $d$-dimensional figure with a bounded measurable set in $d$-dimensional Euclidean space. Then instead of counting the number of ways to place some number of objects so that they do not overlap, we instead consider the total measure of non-overlapping placements in state space. 

For the case where our collection of figures contains only one object, we have the following nice probabilistic result.

\begin{theorem}
Let $\mathcal{S}$ be a bounded measurable set in $d$-dimensional Euclidean space. Let $p_k(n^d)$ be the probability that no two copies intersect when we place $k$ copies of $\mathcal{S}$ independently and uniformly at random inside a $d$-dimensional torus of side-length $n$. Then $n^{dk}p_k(n^d)$ is eventually a polynomial for each $k$, and these polynomials form a sequence of binomial-type.
\end{theorem}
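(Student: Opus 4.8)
The plan is to mirror the combinatorial argument used for toroidal grid graphs, replacing counting with integration and the factor $n^d$ (the number of translations of a configuration) by a continuous analogue. First I would set up the continuous analogue of an overlap graph: a graph $G$ whose vertices are labeled by bounded measurable sets $\mathcal{S}_1,\dots,\mathcal{S}_v$ (here all equal to $\mathcal{S}$), where a placement — an assignment of a translation vector $t_i$ to each vertex, taken in the torus $\mathbb{R}^d/n\mathbb{Z}^d$ — is consistent with $G$ if $(\mathcal{S}+t_i)\cap(\mathcal{S}+t_j)$ has positive measure whenever $i\sim j$ in $G$. Since $\mathcal{S}$ is bounded, the set of $(t_1,\dots,t_v)$ consistent with a connected $G$ and containing the origin in the first coordinate's figure is a bounded measurable subset of $\mathbb{R}^{d(v-1)}$; call its measure $v(G)$ (the continuous analogue of Lemma~\ref{polylem1}). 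Then, exactly as in Lemma~\ref{polylem2}, the total measure of placements of a connected $G$ inside the torus of side $n$ is $v(G)\,n^d$ for $n$ large enough (the $n^d$ coming from translating the whole configuration around the torus, the boundedness of $\mathcal{S}$ ensuring no wraparound interference once $n$ exceeds the diameter of $\mathcal{S}$), and the multi-component analogue of Lemma~\ref{polylem3} follows by independence across components.

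Next I would run inclusion–exclusion over the $\binom{k}{2}$ pairs of copies, exactly as in the proof of Theorem~\ref{polymainredux}: the measure of placements in which a specified set of pairs overlap equals the measure of placements consistent with the corresponding overlap graph, hence is eventually a polynomial in $N = n^d$. This shows $n^{dk}p_k(n^d)$ — which is the total measure of non-overlapping placements of $k$ copies of $\mathcal{S}$, the normalizing factor $n^{dk}$ being the measure of all placements — is eventually a polynomial in $N$. To get binomial type, I would repackage this in the intersection-schema language of Definition~\ref{ISdef}: take the weighted set $S$ to consist of the finitely many distinct connected overlap graphs built from copies of $\mathcal{S}$ — but it is cleaner to follow Theorem~\ref{mainresult} directly, with weight $k$ attached to a copy-count of $k$ and $a(g) = (-1)^{|E(g)|}\alpha(g)\sum_{c\unlhd g} w_c$ with $v(G)$ now the continuous measure. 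Then Theorem~\ref{mainform}'s bookkeeping (handling indistinguishable copies via the weights $w_c$ and the automorphism factor $\alpha$) carries over verbatim, since nothing in it used discreteness, and Theorem~\ref{mainthm} then gives the binomial-type identity for $q_k(N) = n^{dk}p_k(n^d)$.

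The main obstacle is the continuous analogue of Lemma~\ref{polylem1}: one must check that for a connected overlap graph the consistent placements modulo translation form a set of \emph{finite measure}, and that for large $n$ this measure is genuinely realized inside the torus without the wraparound of $\mathbb{R}^d/n\mathbb{Z}^d$ creating spurious extra overlaps or destroying intended ones. This is where boundedness of $\mathcal{S}$ is essential: if $\mathcal{S}\subseteq B(0,R)$, then any connected configuration fits in a ball of radius $vR$, so for $n > 2vR$ the quotient map from that ball into the torus is measure-preserving and injective on the relevant region, and the measure of consistent placements in the torus is exactly $v(G)n^d$. One should also verify a measure-theoretic subtlety absent in the discrete case: whether "overlap" should mean positive measure of intersection or nonempty intersection. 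The right choice is positive measure, and one must note that the boundary cases (intersections of measure zero) form a measure-zero subset of placement space, so they do not affect $p_k$; I would remark on this explicitly. Everything else — inclusion–exclusion, the multiplicativity across components, and the schema bookkeeping — is a routine transcription of the arguments already given.
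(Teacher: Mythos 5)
The paper states this theorem without proof (Section \ref{generalizations} only remarks that one replaces counting by Lebesgue measure), and your proposal is exactly the intended adaptation: continuous overlap graphs, $v(G)$ defined as the measure of consistent configurations modulo translation, the $v(G)n^d$ count for large $n$, inclusion--exclusion over pairs, and the intersection-schema packaging. Two of your specific claims need repair, however.

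First, the normalization is off by $k!$. The quantity $n^{dk}p_k(n^d)$ is the measure of \emph{ordered} $k$-tuples $(t_1,\dots,t_k)$ with no pairwise overlap, while the schema polynomial $q_k(N)$ built with the weights $w_c$ and factors $\alpha(g)$ of Theorem \ref{mainform} is the indistinguishable count $n^{dk}p_k(n^d)/k!$; your identification $q_k(N)=n^{dk}p_k(n^d)$ is therefore false. This is not cosmetic: writing $P_k(N)=n^{dk}p_k(n^d)$ and letting $v_0$ be the measure of $\lbrace t:|\mathcal{S}\cap(\mathcal{S}+t)|>0\rbrace$, one has $P_1(N)=N$ and $P_2(N)=N^2-v_0N$, and $P_2(x+y)-\left(P_2(x)+P_1(x)P_1(y)+P_2(y)\right)=xy\neq 0$, so $\lbrace P_k\rbrace$ is \emph{not} of binomial type under the paper's convention (\ref{eqbintype2}); it is $\lbrace P_k/k!\rbrace$ that is, equivalently $\lbrace P_k\rbrace$ satisfies the traditional definition with the $\binom{k}{i}$ factors. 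You should state which normalization you are proving. (A simplification in return: in the continuous setting exactly coincident copies form a null set of placements, so $\sum_{c\unlhd g}w_c$ degenerates to $v(g)$ and only the correction $\alpha(g)=j!/|\mathrm{Aut}(g)|$ survives.)

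Second, your claim that placements with nonempty but null intersection form a measure-zero subset of placement space is false in general: for $d=1$ and $\mathcal{S}=[0,1]\cup\lbrace 5\rbrace$, the translates $\mathcal{S}$ and $\mathcal{S}+t$ meet in the single point $5$ for all $t$ in a set of measure $2$, so the two readings of ``intersect'' give genuinely different functions $p_k$ (and under the nonempty-intersection reading the relevant events need not even be Lebesgue measurable, since $\mathcal{S}-\mathcal{S}$ need not be). Your argument is correct for the positive-measure-overlap interpretation, which is the only one under which the integrals are defined for arbitrary bounded measurable $\mathcal{S}$; but you must adopt that interpretation by fiat (or restrict to sufficiently regular $\mathcal{S}$) rather than assert that the two notions coincide almost everywhere. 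The rest --- finiteness of $v(G)$ from boundedness, absence of wraparound for large $n$, multiplicativity over connected components --- is a correct transcription of Lemmas \ref{polylem1}--\ref{polylem3}.
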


\subsection{Non-toroidal grids}

We can also ask what happens if, instead of placing our figures on the toroidal grid graph $T^{d}_n$, we place them on the regular grid graph $L^{d}_n$. It turns out that in this case we lose the binomial-type property. However, the number of possible placements is still a polynomial (in $n$ instead of $n^d$, however), and we therefore have the following analogue to Theorem \ref{polymain}. 

\begin{theorem}\label{polymainnt}
Let $S$ be a finite multiset of $d$-dimensional figures. Let $f_{S}(n)$ be the number of ways to place all of the figures in $S$ on $L^{d}_n$ such that none overlap. Then there exists a positive integer $n_0$ and an integer polynomial $p(x)$ such that $f_{S}(n) = p(n)$ for all $n \geq n_0$. 
\end{theorem}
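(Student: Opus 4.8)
The plan is to rerun the inclusion--exclusion argument behind Theorem~\ref{polymain}, replacing the ``$n^d$ translations'' bookkeeping valid on the torus by the corresponding count on $L^d_n$. The only place the torus structure was used is in Lemmas~\ref{polylem2} and~\ref{polylem3}: a consistent configuration of figures on $T^d_\infty$ has exactly $n^d$ translates inside $T^d_n$. On $L^d_n$ this must be modified. A single figure whose vertices span an interval of length $g_i$ in the $i$-th coordinate can be placed inside $L^d_n$ in exactly $\prod_{i=1}^d (n - g_i)$ ways once $n$ exceeds every $g_i$ --- already a degree-$d$ polynomial in $n$ rather than the clean monomial $n^d$. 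More generally, by Lemma~\ref{polylem1} a connected overlap graph $G$ has only finitely many consistent configurations $c$ on $T^d_\infty$, and each such $c$ has its own coordinate spans $g_1(c),\dots,g_d(c)$; hence the number of placements on $L^d_n$ consistent with $G$ equals $\sum_{c \unlhd G} \prod_{i=1}^d (n - g_i(c))$ for large $n$, which is a polynomial in $n$ of degree $d$. This is the non-toroidal replacement for Lemma~\ref{polylem2}.

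The replacement for Lemma~\ref{polylem3} is then immediate. Since the figures are distinct, the connected components of an overlap graph $G$ with components $G_1,\dots,G_r$ can be placed independently (verbatim as in the proof of Lemma~\ref{polylem3}), so the number of placements on $L^d_n$ consistent with $G$ is the product over $j$ of the degree-$d$ polynomials attached to the $G_j$, hence a polynomial in $n$ of degree $rd$. With this in hand the proof of Theorem~\ref{polymainredux} transcribes directly: expanding $f_S(n)$ via inclusion--exclusion over the events ``$f_i$ and $f_j$ overlap'' as in Equation~\ref{pieeq}, every term counts placements consistent with some overlap graph and is therefore eventually a polynomial in $n$, and so is their finite alternating sum. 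This settles the theorem for sets of distinct figures.

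For a genuine multiset $S$ with $c_i$ copies of $f_i$, I would apply the coloring trick from the proof of Theorem~\ref{polymain}: color the repeated figures so that they become distinguishable, invoke the distinct-figure case just established to obtain an eventual polynomial $p(n)$, and note that --- since figures that do not overlap cannot occupy identical positions on $L^d_n$ either --- each uncolored non-overlapping placement is counted exactly $\prod_i c_i!$ times. Hence $f_S(n) = p(n)/\prod_i c_i!$, again eventually a polynomial, with integer coefficients by the same argument used in the torus case. It is also worth remarking explicitly why the binomial-type conclusion does \emph{not} survive: the proof of Theorem~\ref{mainthm} relied on every connected configuration contributing the \emph{same} factor $N = n^d$, which is exactly what let the relevant generating function factor as an $N$-th power; over $L^d_n$ the per-configuration factor $\prod_i (n - g_i(c))$ depends on the shape of $c$, so no such factorization is available.

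The one step that needs genuine care in the write-up --- and the only real obstacle --- is the bounded-region counting lemma underlying the paragraph above: verifying that the number of translates of a fixed finite configuration that lie inside $L^d_n$ is, for $n$ past the configuration's girth, exactly $\prod_{i=1}^d (n - g_i(c))$, and that summing this over the finitely many configurations consistent with a given connected $G$ (which may have different bounding boxes, and hence contribute polynomials with different lower-order terms) still yields one honest polynomial. Once that is nailed down, everything downstream is a routine re-reading of Section~\ref{polynomial} with ``$n^d$'' replaced by the appropriate degree-$d$ polynomial and ``$n^{rd}$'' by the appropriate degree-$rd$ polynomial.
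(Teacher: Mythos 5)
Your proposal is correct and follows essentially the same route as the paper: the paper's proof is exactly the observation that each consistent configuration $c$ contributes $(n-g_1(c))\cdots(n-g_d(c))$ translates in $L^d_n$ instead of $n^d$, after which the inclusion--exclusion and coloring arguments of Section~\ref{polynomial} go through unchanged. Your additional remarks (summing over the finitely many configurations per connected component, and the explanation of why the binomial-type property is lost) are correct elaborations of details the paper leaves implicit.
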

\begin{proof}
We follow the proof of Theorem \ref{polymain}, with the slight change that for each configuration, instead of there being $n^d$ valid translations, there are only $(n-g_1)(n-g_2)\dots(n-g_d)$ valid translations, where $g_i$ is the girth of the configuration in dimension $i$ (that is, $g_i(c) = \max_{x, y \in c} |x_i - y_i|$). 
\end{proof}

We can extend this even farther. By the same reasoning as in the proof of Theorem \ref{polymain}, the above result holds for grid ``rectangles'' with unequal dimensions (like $n \times 2n$ rectangles, or $4n \times 5n \times 6n$ boxes). In fact, we have the following general result:

\begin{theorem}\label{polymainntgen}
Let $G$ be a graph formed by taking a finite subset of the unit $d$-dimensional cells comprising $T^{d}_{\infty}$. Let $G^n$ be the graph obtained by replacing each unit cell by a cell of length $n$ divided regularly into $n^d$ unit cells. Let $S$ be a finite multiset of $d$-dimensional figures, and let $f_{S}(n)$ be the number of ways to place all of the figures in $S$ on $G^n$ such that none overlap. Then there exists a positive integer $n_0$ and an integer polynomial $p(x)$ such that $f_{S}(n) = p(n)$ for all $n \geq n_0$.
\end{theorem}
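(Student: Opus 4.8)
The plan is to run the proof of Theorem~\ref{polymain} essentially word for word, replacing only the one place where it invoked Lemma~\ref{polylem3} (that the number of placements consistent with a given overlap graph is eventually polynomial). First I would pin down $G^n$. Let $A \subseteq \mathbb{Z}^d$ be the finite set indexing the unit cells that comprise $G$, so that $G$ is built from the cubes $Q_{\mathbf a} = \prod_{i=1}^{d}[a_i,a_i+1]$ for $\mathbf a \in A$, and set $R = \bigcup_{\mathbf a \in A} Q_{\mathbf a}$. Then the vertex set of $G^n$ is exactly $V_n = \bigcup_{\mathbf a \in A}\{\, x \in \mathbb{Z}^d : n a_i \le x_i \le n a_i + n \text{ for all } i \,\} = (nR)\cap\mathbb{Z}^d$, since blowing cell $\mathbf a$ up by a factor of $n$ turns $Q_{\mathbf a}$ into the box $nQ_{\mathbf a}$ and adjacent cells glue along their common (blown-up) face. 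Because a $d$-dimensional figure is simply a subset of vertices of $T^d_\infty$, and two figures overlap precisely when they share a vertex, a placement of the figures of $S$ lies in $G^n$ if and only if the (translated) vertex set of each figure is contained in $V_n$; the edge structure of $G^n$ is irrelevant, and all overlap relations are inherited from $T^d_\infty$.

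With this setup the inclusion-exclusion argument of Theorem~\ref{polymainredux} applies essentially word for word (and the passage to multisets with repeated figures is handled exactly as in the proof of Theorem~\ref{polymain}, by colouring repeats and dividing by $\prod_i c_i!$; integrality of the resulting polynomial follows as there). This reduces the theorem to the following claim: for every overlap graph $\Gamma$ on the figures of $S$, the number of placements on $G^n$ consistent with $\Gamma$ is eventually a polynomial in $n$. Splitting $\Gamma$ into connected components $\Gamma_1,\dots,\Gamma_r$ and applying Lemma~\ref{polylem1} to each, a consistent placement on $G^n$ amounts to choosing, for each $j$, one of the finitely many configurations $c$ of $\Gamma_j$ (which I identify with the bounded subset of $\mathbb{Z}^d$ it occupies) together with a vector $t_j \in \mathbb{Z}^d$ satisfying $c + t_j \subseteq V_n$. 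Since these constraints on the $t_j$ are independent of one another, the number of consistent placements on $G^n$ is a finite sum, over tuples of configurations, of products $\prod_j |T_n(c^{(j)})|$ where $T_n(c) := \{\, t \in \mathbb{Z}^d : c + t \subseteq V_n \,\}$. It therefore suffices to show that $|T_n(c)|$ is eventually a polynomial in $n$ for every finite $c \subseteq \mathbb{Z}^d$. (In contrast to the toroidal case, $|T_n(c)|$ does not split as a product over the connected components of $c$, nor is it a constant times $n^{dr}$; this is exactly why the binomial-type property is lost on $L^d_n$ and its relatives.)

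To prove this ``erosion'' claim, write $\mathbf{1}_{R}$ via inclusion-exclusion on the cube arrangement as a finite $\mathbb{Z}$-linear combination $\sum_{\emptyset \ne T \subseteq A}(-1)^{|T|+1}\mathbf{1}_{B_T}$ of indicators of axis-parallel boxes $B_T = \bigcap_{\mathbf a \in T}Q_{\mathbf a}$ with integer endpoints; scaling gives the analogous expression for $\mathbf{1}_{nR}$ in terms of the boxes $nB_T$. Now $c + t \subseteq V_n$ exactly when $\prod_{x \in c}\mathbf{1}_{nR}(x+t) = 1$; expanding each factor by the displayed formula and distributing the product, $\sum_{t}\prod_{x\in c}\mathbf{1}_{nR}(x+t)$ becomes a finite signed sum, indexed by the functions $\psi$ that assign to each $x\in c$ one of the boxes, of the lattice-point counts of the sets $\bigcap_{x\in c}\bigl(nB_{\psi(x)} - x\bigr)$. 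Each such set is again an axis-parallel box, and for $n$ large each of its endpoints --- being a maximum or a minimum over $x \in c$ of finitely many affine functions of $n$ with integer coefficients --- is itself affine in $n$ with integer coefficients (for large $n$ the extremum is realised by a single one of the affine functions). Hence the number of lattice points of each such box is a product of affine side-lengths once the sign of each side-length has stabilised, i.e.\ eventually a polynomial in $n$; summing with signs, $|T_n(c)|$ is eventually a polynomial in $n$, which completes the argument. One takes $n_0$ large enough for all figures of $S$ to be placeable on $G^n$ and for all of these ``eventually'' thresholds to have been crossed.

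The real work here is bookkeeping rather than any single hard idea: one must check that the local picture of $V_n$ around a placed configuration is captured faithfully, and that the finitely many stabilisation thresholds (for the embeddings of configurations, for the sign of each affine side-length, and for placeability of the figures) can be taken uniformly. The one conceptual point --- that containment of a fixed finite set inside a dilated finite union of unit cubes is, after inclusion-exclusion, an Ehrhart-style count and hence eventually polynomial --- is the crux, but each of its ingredients is routine.
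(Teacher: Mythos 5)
Your proposal is correct, and its overall skeleton matches the paper's: both reduce, via the inclusion--exclusion of Theorem~\ref{polymainredux} and the colouring trick of Theorem~\ref{polymain}, to showing that the number of translates of a single fixed bounded configuration that fit inside $G^n$ is eventually a polynomial in $n$. Where you diverge is in how that last count is established. The paper partitions the placements of a single figure according to exactly which of the blown-up $n$-cells it meets, asserts that each such piece is polynomial in $n$, and sums over the finitely many subsets of cells; the justification of the per-piece claim is left implicit. You instead write the indicator of the union of cubes as a signed sum of indicators of the boxes $B_T=\bigcap_{\mathbf a\in T}Q_{\mathbf a}$, expand the product over the points of the configuration, and reduce everything to counting lattice points in axis-parallel boxes whose endpoints are (eventually) affine functions of $n$ with integer coefficients. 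This is a genuinely different execution of the key step, and it buys you a complete proof of exactly the assertion the paper glosses over: the ``intersect exactly these $c$ cells'' counts of the paper are themselves most naturally verified by an inclusion--exclusion of the kind you carry out, so your route is arguably the more honest one. Two small points worth tightening if you write this up: the lattice-point count of an interval $[\alpha n+\beta,\gamma n+\delta]$ is $(\gamma-\alpha)n+(\delta-\beta)+1$ rather than the bare side-length (still affine, so nothing breaks); and the final division by $\prod_i c_i!$ yields an integer-valued polynomial because the counts are integers, which is the same level of justification the paper itself gives for integrality of the coefficients.
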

\begin{proof}
Again, by following the same reasoning in \ref{polymain}, it suffices to show that the number of ways to place any one figure on $G^n$ is a polynomial in $n$.

To show this, call our figure $f$, and divide $G^n$ into copies of $L^{d}_n$ (in the same way that we can divide $G^1$ into unit $d$-dimensional cells). The number of ways to place $f$ in $L^{d}_n$ is a polynomial in $n$ (namely, the same polynomial used in the proof of Theorem \ref{polymainnt} above), so the total number of ways to place $f$ so that it stays entirely within one of these copies of $L^{d}_n$ is also a polynomial in $n$. Now, by similar reasoning, the number of ways this figure can intersect exactly $c$ of these $n$-dimensional cells is a polynomial in $n$ (since for each specific choice of $c$ cells, the number of ways this figure can intersect exactly those cells will be a polynomial in $n$). By summing all of these polynomials (and there are a finite number of these, since $G$ contains a finite number of unit cells), we find that the total number of ways to place $f$ in $G^n$ is a polynomial in $n$, as desired.
\end{proof}

\subsection{Other lattices}

Finally, the only discrete lattice we have considered is the square lattice. However, analogues of all of the above theorems exist for other lattices, such as triangular lattices and hexagonal lattices (and by exactly the same logic). 

\section{Application to Chromatic Polynomials}\label{chrom}

The following open problem appears as Exercise 4.82 in \textit{Enumerative Combinatorics, vol. 1}.

\begin{theorem}\label{chrommain}
Let $\chi_n(x)$ be the chromatic polynomial of the $n\times n$ toroidal grid graph, and let $q_k(n^2)$ be the coefficient of $x^{n^2-k}$ in $\chi_n(x)$. Then $(-1)^kq_k(n^2)$ is eventually a polynomial in $n^2$, and this sequence of polynomials is of binomial-type. 
\end{theorem}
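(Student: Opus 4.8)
The plan is to push the coefficient of $x^{n^2-k}$ through the intersection-schema machinery of Section~\ref{ischemes} after a single application of Whitney's theorem. I would start from the subgraph-expansion form $\chi_G(x) = \sum_{A \subseteq E(G)} (-1)^{|A|} x^{c(A)}$, where $c(A)$ is the number of connected components of the spanning subgraph $(V(G),A)$ --- equivalently one may invoke its cancellation-free refinement, the broken-circuit theorem, which has the cosmetic advantage of producing a manifestly nonnegative count. Taking $G = T^2_n$ (so $|V(G)| = n^2$) and writing $c(A) = n^2 - r(A)$ with $r(A) = n^2 - c(A)$ the rank, the coefficient of $x^{n^2-k}$ becomes $\sum_{A : r(A) = k}(-1)^{|A|}$.

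Next I would reorganize this sum by connected components. Every $A \subseteq E(T^2_n)$ splits uniquely into its edge-components $A_1,\dots,A_t$ --- the maximal connected subgraphs of $(V(T^2_n),A)$ that carry at least one edge --- and these occupy pairwise disjoint vertex sets. Since a connected graph has rank (vertices)${}-1$, we get $r(A) = \sum_i (v(A_i)-1)$ and $|A| = \sum_i e(A_i)$, where $v$ and $e$ count vertices and edges. Thus the coefficient of $x^{n^2-k}$ equals
$$\sum \prod_{i=1}^{t} (-1)^{\,e(A_i) - v(A_i) + 1},$$
the sum ranging over all families of vertex-disjoint connected subgraphs $A_1,\dots,A_t$ of $T^2_n$, each with at least one edge and with $\sum_i(v(A_i)-1) = k$.

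This is exactly a signed, weighted count of non-overlapping placements in the sense of Sections~\ref{polynomial}--\ref{ischemes}. The relevant weighted set consists of the connected subgraphs of $T^2_\infty$ carrying at least one edge, taken up to translation --- here a \emph{figure} carries its own edge set, not merely its vertex set --- with weight $w(f) = v(f)-1 \in \mathbb{Z}^+$; there are finitely many figures of each weight, since a connected figure with $w+1$ vertices has bounded coordinate span and each vertex set supports only finitely many edge sets. Give each figure $f$ the multiplicative sign $\varepsilon(f) = (-1)^{e(f)-v(f)+1}$ (the parity of its first Betti number; $\varepsilon(f)=+1$ for every tree, which is why the broken-circuit version of the count is genuinely unsigned). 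Two placed figures overlap precisely when their vertex sets meet, so vertex-disjointness is non-overlap. Running the inclusion-exclusion of Subsection~\ref{yesrepeats} with each consistent placement weighted by $\prod_i \varepsilon(f_i)$ --- legitimate because this product depends only on the multiset of figures used and hence factors out of each overlap-graph term, exactly as in the proofs of Theorems~\ref{polymainredux} and \ref{mainform} --- produces an intersection schema (Definition~\ref{ISdef}) with $a(g) = (-1)^{|E(g)|}\,\alpha(g)\big(\prod_{f\in g}\varepsilon(f)\big)\sum_{c \unlhd g} w_c$, the last product being over the figure-labels of the vertices of $g$. By the (routine) analogue of Theorem~\ref{mainform}, the schema's polynomials $q^{\mathcal{I}}_i(N)$ satisfy $q^{\mathcal{I}}_k(n^2) = {}$ the displayed sum for all large $n$; and since $\sum_i(e(A_i)-v(A_i)+1) = |A| - k$, a one-line computation gives $q^{\mathcal{I}}_k(n^2) = (-1)^k q_k(n^2)$, where $q_k(n^2)$ is the coefficient of $x^{n^2-k}$ from the statement. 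Lemma~\ref{polylem2} --- the one place the toroidal structure enters --- then shows each $q^{\mathcal{I}}_k$ is a polynomial in $N = n^2$, and Theorem~\ref{mainthm} shows the sequence $\{q^{\mathcal{I}}_k\}_k = \{(-1)^k q_k\}_k$ is of binomial type, which is the assertion. Replacing $n^2$ by $n^d$ throughout handles toroidal grids of any dimension.

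I expect the real content to be the reduction itself --- seeing that Whitney's expansion, grouped by connected components, is a weighted non-overlapping-placement problem with the (somewhat unusual) weight ``number of vertices minus one'' and a sign equal to the Betti-parity of each cluster; granted this, the conclusion is immediate from Theorems~\ref{mainform} and \ref{mainthm}. The points that need care are all minor: (i) extending the earlier sections from figures-as-vertex-sets to figures-as-subgraphs and to multiplicative per-figure weights, which is immediate since Lemmas~\ref{polylem1}--\ref{polylem3} use only finiteness, translation-equivariance, and the fact that ``overlap'' means sharing a point, and since such weights factor through the inclusion-exclusion; (ii) the weighted-set finiteness condition, which follows from connectedness; and (iii), if one insists on working with the broken-circuit theorem rather than the plain subgraph expansion, choosing a linear order on the edges of $T^2_n$ --- for instance ordering edges first by a height coordinate --- that is translation-compatible on bounded regions, so that ``broken-circuit-free'' is a translation-invariant property of a figure once $n$ is large.
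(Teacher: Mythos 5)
Your proof is essentially correct, but it takes a genuinely different route from the paper's. The paper goes through Whitney's broken-circuit theorem, which turns $(-1)^k q_k(n^2)$ into an honest unsigned count of $k$-edge broken-circuit-free subsets; the price is that ``broken circuit'' depends on a linear edge ordering, and no ordering on the torus is translation-invariant, so a figure that is broken-circuit-free as a subset of $T^d_\infty$ can acquire a broken circuit when placed across the seam of $T^d_n$. Most of the paper's Section~\ref{chrom} is devoted to repairing this: it fixes a ``natural'' ordering, distinguishes locally from globally good figures, and proves via a minimum-spanning-tree bijection (Theorem~\ref{convthm} and Corollary~\ref{lgcor}) that the two counts of cycle-free placements agree, after which Theorem~\ref{mainresult} applies verbatim to the set of locally good figures weighted by edge count. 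You instead start from the plain Whitney rank expansion $\chi_G(x)=\sum_A(-1)^{|A|}x^{c(A)}$, group by edge-components, and absorb the sign $(-1)^{e(A_i)-v(A_i)+1}$ into the schema function $a$; since Definition~\ref{ISdef} and Theorem~\ref{mainthm} allow arbitrary complex-valued $a$, and since a per-figure multiplicative weight depends only on the multiset of figures placed and hence factors out of every term of the inclusion-exclusion for that multiset, the signed analogue of Theorem~\ref{mainform} is indeed routine. Your route buys the complete elimination of the edge-ordering issue and of Theorem~\ref{convthm} --- arguably the hardest step in the paper --- at the cost of (a) carrying signs and the weight $w(f)=v(f)-1$ through machinery the paper only states for unsigned edge-weighted counts, and (b) working with figures-as-connected-subgraphs rather than vertex sets; both extensions are genuinely routine given how Lemmas~\ref{polylem1}--\ref{polylem3} are proved.

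Three small points. First, your displayed sum $\sum\prod_i(-1)^{e(A_i)-v(A_i)+1}$ equals $(-1)^kq_k(n^2)$, not $q_k(n^2)$ as written there (you state the correct identity at the end, so this is only an internal sign slip). Second, you should say explicitly that for fixed $k$ and $n$ large, every component with $\sum_i(v(A_i)-1)=k$ has at most $k+1$ vertices and hence cannot wrap around the torus, so it genuinely is a translate of a figure from $T^d_\infty$; this is the same ``eventually'' caveat the paper uses throughout. Third, your fallback remark (iii) understates the difficulty of the broken-circuit route: an ordering that is translation-compatible only ``on bounded regions'' does not make broken-circuit-freeness translation-invariant on the torus, and bridging that gap is exactly what the paper's MST bijection is for. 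Since your primary argument never invokes broken circuits, this does not affect your proof.
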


In this section, we will provide a proof of this theorem, thus resolving this open problem. In addition, we will prove that the above claim holds not just for the $n \times n$ toroidal grid graph but for $T^{d}_n$, for any number of dimensions $d$. 

To do this, we will reduce the problem of computing the coefficient of $x^{n^2-k}$ in $\chi_n(x)$ to a placement problem, and then apply Theorem \ref{mainresult}. Our main tool for doing this will be Whitney's broken-circuit theorem, stated below.

\begin{definition}
In a graph $G = (V,E)$ with a total ordering on the edges, a \textit{broken circuit} is a subset of $E$ formed by taking a cycle in $G$ and removing the largest edge (with respect to the ordering).
\end{definition}

\begin{theorem}
\textbf{(Whitney's broken-circuit theorem)} \label{whitthm}
Let $G$ be a finite graph with a strict ordering on the edge set $E$. Then, for $n$ between $0$ and $|V|$ inclusive, the coefficient of $\lambda^{|V| - k}$ in $\chi_{G}(\lambda)$ is equal to $(-1)^{k}$ times the number of $k$-element subsets of $E$ which do not contain any broken-circuit of $G$ as a subset.
\end{theorem}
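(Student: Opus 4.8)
The plan is to prove the theorem via a sign-reversing involution applied to the Whitney rank expansion of the chromatic polynomial. First I would establish the standard subset expansion $\chi_G(\lambda) = \sum_{A \subseteq E}(-1)^{|A|}\lambda^{c(A)}$, where $c(A)$ denotes the number of connected components of the spanning subgraph $(V,A)$. This follows from inclusion-exclusion on proper colorings: for each edge $e$ consider the ``bad'' event that its two endpoints receive the same color; the number of colorings that are monochromatic on every edge of a fixed $A \subseteq E$ is exactly $\lambda^{c(A)}$ (each component of $(V,A)$ must be given a single color), and inclusion-exclusion over the bad events yields the claimed formula.

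Next I would identify the surviving terms. Since the largest edge of any cycle $C$ is $\max(C)$, any cycle $C \subseteq A$ forces its broken circuit $C \setminus \{\max(C)\}$ into $A$; hence every subset containing no broken circuit (an \emph{NBC set}) is a forest. For a forest with $k$ edges one has $c(A) = |V| - k$, so each NBC $k$-subset contributes exactly $(-1)^k \lambda^{|V|-k}$. Collecting terms by $k$, the theorem becomes equivalent to the assertion that, in the expansion above, all subsets that \emph{do} contain a broken circuit cancel in pairs preserving both $c(A)$ and the power of $\lambda$ while flipping the sign $(-1)^{|A|}$.

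To produce this cancellation I would construct an explicit involution $\phi$ on the collection of subsets containing a broken circuit. Each broken circuit $B$, being a cycle with its largest edge deleted, determines a unique completing edge $e_B = \max(C) > \max(B)$ for which $B \cup \{e_B\}$ is a cycle. Given a subset $A$ containing a broken circuit, let $B^{*}$ be the broken circuit contained in $A$ whose completing edge $e^{*} = e_{B^{*}}$ is smallest (breaking ties by a fixed order on broken circuits), and set $\phi(A) = A \mathbin{\triangle} \{e^{*}\}$, toggling the single edge $e^{*}$. Since $e^{*} \notin B^{*}$, the broken circuit $B^{*}$ survives the toggle, so $\phi(A)$ still contains a broken circuit; and because $e^{*}$ closes (or lies on) the cycle $B^{*} \cup \{e^{*}\}$, toggling it neither merges nor splits components, so $c(\phi(A)) = c(A)$ while $|\phi(A)| = |A| \pm 1$. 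These are exactly the sign-reversing, power-preserving properties required.

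The main obstacle is verifying that $\phi$ is genuinely an involution, i.e. that $\phi$ selects the same edge $e^{*}$ for both $A$ and $\phi(A)$. The key observation that makes this work is that any broken circuit with completing edge at most $e^{*}$ cannot itself contain $e^{*}$, since its completing edge strictly exceeds all of its own edges. Consequently the family of broken circuits with completing edge $\le e^{*}$ is unchanged by toggling $e^{*}$, so $B^{*}$ remains the minimizer for $\phi(A)$ and $\phi(\phi(A)) = A$. With the involution in hand, every term indexed by a broken-circuit-containing subset cancels, leaving precisely the NBC contributions computed above, which proves the theorem.
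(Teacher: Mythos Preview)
The paper does not actually prove Whitney's theorem: its entire proof is the citation ``See \cite{Wh32}.'' So there is no argument in the paper to compare against; the theorem is imported as a black box.

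Your proposal, by contrast, supplies a complete and correct self-contained proof. The subset expansion $\chi_G(\lambda)=\sum_{A\subseteq E}(-1)^{|A|}\lambda^{c(A)}$ is derived correctly by inclusion--exclusion, the identification of NBC sets as forests is right (any cycle in $A$ yields the broken circuit obtained by deleting its maximal edge), and the sign-reversing involution is the standard one and is handled carefully. The one place that merits a closer look---verifying that $\phi$ really is an involution when several broken circuits share the minimal completing edge $e^{*}$---is in fact fine: since every broken circuit with completing edge $\le e^{*}$ has all of its own edges strictly below $e^{*}$, none of them contains $e^{*}$, so toggling $e^{*}$ leaves the entire family of such broken circuits inside $A$ unchanged, and the tie-breaking rule therefore selects the same $B^{*}$ for $\phi(A)$. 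The component count is preserved because the endpoints of $e^{*}$ are joined by $B^{*}\subseteq A\setminus\{e^{*}\}$ in either direction of the toggle. Summing the surviving NBC forests by size gives exactly the claimed coefficients.

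In short: your argument is sound, and it goes well beyond what the paper itself provides, which is merely a reference to Whitney's original 1932 paper.
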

\begin{proof}
See \cite{Wh32}.
\end{proof}

It would be ideal if we could choose as our set $S$ of figures the set of connected $d$-dimensional figures which do not contain any broken-circuits. Unfortunately, the definition of broken-circuit depends on the ordering of the edges in the graph. Fortunately, we can choose an ordering of edges on $T^{d}_{n}$ that largely remedies this problem.

\begin{definition}
In the graph $T^{d}_{n}$, we call an ordering of the edge set $E$ \textit{natural} if it satisfies the following properties: 
\begin{enumerate}
\item
Each segment parallel to $u_i$ (where $u_i$ is the unit vector in dimension $i$) for $i\geq 2$ occurs before all edges parallel to $u_1$ (call these edges horizontal).
\item
If a horizontal edge $e$ connects points $(x_1, x_2, \dots, x_n)$ and points $(x_1+1, x_2, \dots, x_n)$, let the \textit{projection} of edge $e$, $p(e)$, be the $(n-1)$-tuple $(x_2, x_3, \dots, x_n)$. To compare two horizontal edges $e_1$ and $e_2$, let the larger edge be the edge with the lexicographically later projection vector.
\end{enumerate}

A natural ordering of the edge set of the graph $T^{d}_{\infty}$ is defined in the same way. 
\end{definition}

\begin{definition}
We say a figure is \textit{locally good} if its embedding in $T^{d}_{\infty}$ contains no broken-circuit under the natural edge ordering (note that if a translate of some subset of $T^{d}_{\infty}$ contains a broken-circuit iff the subset contains a broken-circuit, by construction of the natural edge ordering). We say that the placement of a figure in $T^{d}_{n}$ is \textit{globally good} if the corresponding subset of $T^{d}_n$ contains no broken-circuit under the natural edge ordering. If a figure is not locally/globally good, then it is locally/globally \textit{bad}.
\end{definition}

Now, we can let $S$ be the set of all locally good figures. However, note that it is possible to place a figure that is locally good on $T^{d}_{n}$ such that it is globally bad (for example, for $d = 2$, we can achieve this in certain cases by placing it so that it intersects the vertical line $x_2 = n$). Similarly, it is possible to place a figure which is locally bad on $T^{d}_{n}$ so that it is globally good. The following theorem will allow us to ignore such cases.

\begin{theorem} \label{convthm}
The total number of ways to place a globally bad cycle-free figure with $k$ edges on $T^{d}_{n}$ (over all possible figures with $k$ edges) is equal to the number of ways to place a locally bad cycle-free figure with $k$ edges on $T^{d}_{n}$.
\end{theorem}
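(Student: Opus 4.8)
The plan is to recast the identity as an equality of two counts of subforests of $T^{d}_{n}$ and then to build a bijection. For $n$ larger than $k+1$ and larger than the girth of any figure that can occur, a placement of a cycle-free figure with $k$ edges on $T^{d}_{n}$ is exactly the same data as a $k$-edge subforest of $T^{d}_{n}$, and ``the figure of this placement is locally good/bad'' is simply a property of the subforest. Subtracting both sides from the (common) total number of $k$-edge subforests, Theorem \ref{convthm} becomes equivalent to
$$\#\{\,k\text{-edge subforests of }T^{d}_{n}\text{ that are globally good}\,\}=\#\{\,k\text{-edge subforests of }T^{d}_{n}\text{ whose figure is locally good}\,\}.$$
(The right-hand side is $n^{d}$ times the number of locally good cycle-free $k$-edge figures, since local goodness is translation-invariant and such a figure has no nontrivial translational symmetry in $T^{d}_{n}$; I would not need this, but it explains the intended shape of the answer.)

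Next I would localize the discrepancy between the two notions. For $n>k+1$ every cycle of $T^{d}_{n}$ with at most $k+1$ edges is contractible --- a non-contractible cycle winds around some coordinate and so has at least $n$ edges --- hence lifts uniquely, up to translation, to a cycle of $T^{d}_{\infty}$. A $k$-edge forest $A$ contains a broken circuit of $T^{d}_{n}$ iff some such small cycle $C$ satisfies $C\setminus\{b(C)\}\subseteq A$, where $b(C)$ is the largest edge of $C$ in the natural order of $T^{d}_{n}$; and the figure of $A$ is locally bad iff some such $C$ satisfies $C\setminus\{a(C)\}\subseteq A$, where $a(C)$ is the largest edge of the lift of $C$ in the natural order of $T^{d}_{\infty}$. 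By the shape of the natural order --- ``vertical'' edges below horizontal ones, horizontal edges compared by the lexicographic order of their transverse coordinates --- one has $a(C)=b(C)$ unless $C$, as placed, wraps around the torus in the coordinate relevant to that lexicographic comparison, which is exactly the ``straddling the seam'' phenomenon noted before the theorem.

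The bijection itself is built from an edge-swap. If $A$ is globally bad with locally good figure and $C$ is a small cycle witnessing this --- so $C\setminus\{b(C)\}\subseteq A$, and local goodness forces $a:=a(C)\ne b:=b(C)$, whence $a$ lies on the path $C\setminus\{b\}\subseteq A$ --- then $(A\setminus\{a\})\cup\{b\}$ is again a $k$-edge forest (this uses only that $A$ is a forest containing the path $C\setminus\{b\}$), and it now contains the path $C\setminus\{a\}$, which is a broken circuit of $T^{d}_{\infty}$. To upgrade this to a bijection onto the globally-good, locally-bad forests I would order the relevant small cycles by their distinguished edge, repeatedly apply the swap along a canonically chosen lowest bad cycle --- noting that each swap strictly raises the distinguished edge of every remaining or newly created bad cycle, so the process terminates at a globally good forest --- and then argue that the resulting composite map, read backwards along locally-bad witnesses, is its own inverse.

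The main obstacle is precisely this last point: a single swap need not trade ``globally good'' for ``locally bad figure,'' because it can leave other bad cycles intact or create new ones, so one must track the whole family of relevant cycles simultaneously and check that the iterated (or sign-reversing-involution) version of the swap is well defined and bijective. This is where the fine structure of the natural order has to be used in earnest: all the relevant cycles occupy one bounded window, the $T^{d}_{n}$-versus-$T^{d}_{\infty}$ order discrepancy is a single cyclic rotation of the horizontal edges across one seam, and the elementary swaps along cycles with distinct distinguished edges nest compatibly --- which is what forces the canonical choices made on the two sides to match up.
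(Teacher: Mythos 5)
Your setup (the reduction to $k$-edge subforests, the observation that for $n>k+1$ every relevant cycle is contractible and lifts to $T^{d}_{\infty}$, and the identification of the seam as the only source of discrepancy between the two orderings) is sound, and your elementary swap $A \mapsto (A\setminus\{a(C)\})\cup\{b(C)\}$ is a correct local move: since $A$ is a forest containing the path $C\setminus\{b(C)\}$, adding $b(C)$ creates only the cycle $C$, and deleting $a(C)\in C$ restores a forest that now contains the local broken circuit $C\setminus\{a(C)\}$. But the proof is not complete, and you say so yourself: everything hinges on iterating the swap in a way that terminates, is independent of choices, and is invertible, and the two claims you lean on --- that each swap strictly raises the distinguished edge of every remaining or newly created bad cycle, and that the composite map read backwards along locally-bad witnesses is its own inverse --- are asserted, not proved. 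Neither is obvious: a swap along one cycle can destroy or create witnessing cycles elsewhere in the forest (the added edge $b(C)$ may complete new broken circuits and the deleted edge $a(C)$ may lie on other witnessing paths), and without a proof of order-independence the map is not even well defined. Since this is exactly the hard content of the theorem, there is a genuine gap.

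The paper closes this gap with a one-shot construction that avoids the iteration entirely. Given a globally bad cycle-free $f$ with broken circuits $B_1,\dots,B_b$ and completing edges $e_1,\dots,e_b$, one first notes that the $e_i$ are pairwise distinct (a coincidence $e_i=e_j$ would force a cycle in $f$), forms $\bar f = f\cup\{e_1,\dots,e_b\}$, and takes $f'$ to be the minimum spanning tree (spanning forest) of $\bar f$ with edge weights given by the \emph{other} natural ordering. The cycle property of minimum spanning trees then does all the bookkeeping simultaneously: each of the $b$ edges of $\bar f\setminus f'$ is the heaviest edge of the unique cycle it closes in $f'$, so $f'$ is a cycle-free $k$-edge figure with $b$ broken circuits in the new ordering, and running the identical construction with the two orderings exchanged recovers $f$. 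Your iterated swap, were the canonical choices pinned down, would essentially be recomputing this minimum spanning tree one exchange at a time; packaging the whole family of exchanges as an MST is what makes well-definedness and bijectivity immediate. If you want to salvage your approach, the cleanest repair is to replace the iteration by exactly this global step.
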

\begin{proof}
We will exhibit a bijection between these two sets. Assume we have a figure $f$ (with $|E| = k$) which is cycle-free but globally bad. Since it is globally bad, it must contain some number of broken-circuits (under the natural edge ordering for $T^{d}_{n}$). Let the number of broken-circuits be $b$, and let $e_i$ be the edge needed to make the $i$th broken circuit a cycle. Note first that we cannot have $e_i = e_j$ for $i \neq j$, because if this were the case, then there would be two distinct paths between the endpoints of $e_i$ in $f$, which would imply that there is a cycle in $f$. Thus the $e_i$ comprise $b$ different edges. 

Let $\bar{f}$ be the graph formed by adding all of these edges to $f$ (so $\bar{f}$ now has $k+b$ edges). Now, consider $\bar{f}$ as a subgraph of $T^{d}_{\infty}$ with its natural edge ordering. Let $f'$ be the minimum spanning tree of $\bar{f}$, where we let the weight of the $r$th largest edge of $\bar{f}$ be $r$. We now claim that $f'$ is locally bad (it is cycle-free since it is a tree). To see this, note first that since $f'$ and $f$ are both spanning trees of $\bar{f}$, they both must have the same number $k$ of edges. Next, let $e'_1, e'_2, \dots, e'_b$ be the $b$ edges belonging to $\bar{f}$ but not to $f'$. Note that (by the properties of minimum spanning trees) if we add in $e'_i$ for any $i$, we will construct a unique simple cycle; moreover (again by the properties of minimum spanning trees), $e'_i$ will have the heaviest weight in this cycle. This implies that this set of edges in $f'$ (minus $e'_i$) forms a broken circuit under the local natural edge ordering, so $f'$ is locally bad (and in fact, it contains $b$ broken-circuits under this edge ordering). 

This procedure is a map which sends placements of globally bad cycle-free figures $f$ with $k$ edges to placements of locally bad cycle-free figures $f'$ with $k$ edges. Now, note that we can invert this map via the following procedure, thus showing that this map is a bijection. As before, we take the $b$ broken-circuits and the $b$ edges $e'_i$ required to make the $i$th broken circuit a cycle. We then add these $b$ edges to $f'$ to construct $\bar{f}$, and once we do this we let $f$ be the minimum spanning tree of $\bar{f}$ with respect to the natural edge ordering of $T^{d}_{n}$. To see that this restores the original $f$, note first that the figure $\bar{f}$ constructed in going from $f$ to $f'$ contains exactly the same edges as the figure $\bar{f}$ constructed in going back from $f'$ to $f$. Next, note that none of the edges $e_i$ can belong to the minimum spanning tree of $\bar{f}$ with respect to the natural edge ordering of $T^{d}_n$; this is since each such edge $e_i$ is the largest edge in a cycle, and such edges never occur in minimum spanning trees. But since $\bar{f}$ has $k+b$ edges, and there are $b$ edges $e_i$, this must mean that this minimum spanning tree is exactly $f$, as desired. 
\end{proof}

\begin{corollary} \label{lgcor}
The total number of ways to place a locally good cycle-free figure with $k$ edges on $T^{d}_{n}$  (over all possible figures with $k$ edges) is equal to the number of ways to place a globally good cycle-free figure with $k$ edges on $T^{d}_{n}$
\end{corollary}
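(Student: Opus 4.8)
The plan is to deduce this from Theorem~\ref{convthm} by a complementary counting argument. Fix $k$ and $n$, and consider the (finite) collection of all placements on $T^{d}_{n}$ of cycle-free figures having exactly $k$ edges. I would split this collection in two different ways. On one hand, every figure with $k$ edges is intrinsically either locally good or locally bad, so the total count equals the number of placements of locally good cycle-free $k$-edge figures plus the number of placements of locally bad cycle-free $k$-edge figures. On the other hand, every such placement on $T^{d}_{n}$ is either globally good or globally bad, so the same total also equals the number of placements of globally good cycle-free $k$-edge figures plus the number of placements of globally bad cycle-free $k$-edge figures.

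Next I would invoke Theorem~\ref{convthm}, which asserts precisely that the number of placements of globally bad cycle-free figures with $k$ edges equals the number of placements of locally bad cycle-free figures with $k$ edges. Subtracting this common quantity from each of the two expressions for the total count above immediately yields that the number of placements of locally good cycle-free figures with $k$ edges equals the number of placements of globally good cycle-free figures with $k$ edges, which is exactly the statement of the corollary.

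The only points requiring care are bookkeeping ones: both running totals are finite (for fixed $n$ and $k$ there are only finitely many figures with $k$ edges that embed in $T^{d}_{n}$, since such a figure has girth at most $k$), and the two partitions are genuinely partitions of the \emph{same} underlying set of cycle-free $k$-edge placements --- one partition keyed on whether the figure is locally good, the other on whether the placement is globally good. A placement of a locally good figure may well be globally bad, and the figure underlying a globally good placement may be locally bad, so the ``good'' counts on the two sides are a priori different objects; it is only the totals that agree. Granting these observations, there is no real obstacle here: the substantive content lies entirely in the bijection of Theorem~\ref{convthm}, and the present step is just complementation within the set of all cycle-free $k$-edge placements.
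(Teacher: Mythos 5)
Your proposal is correct and follows essentially the same route as the paper: both arguments are complementary counting within the set of $k$-edge placements, reducing the claim to the equality of the locally bad and globally bad cycle-free counts supplied by Theorem~\ref{convthm}. The only cosmetic difference is that you restrict to cycle-free placements from the outset, whereas the paper works with all placements via the four sets $S_{GG}, S_{GB}, S_{BG}, S_{BB}$ and subtracts the cyclic placements $C \subset S_{BB}$ at the end.
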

\begin{proof}
Consider the following four sets of possible placements of figures with $k$ edges: $S_{GG}$, the set of locally good and globally good placements, $S_{GB}$, the set of locally good but globally bad placements, $S_{BG}$, the set of locally bad but globally good placements, and $S_{BB}$, the set of locally bad and globally bad placements. We wish to show that $|S_{GG}| + |S_{GB}| = |S_{GG}| + |S_{BG}|$, or equivalently, that $|S_{GB}| = |S_{BG}|$.

To do this, it suffices to show that $|S_{GB}| + |S_{BB}| = |S_{BG}| + |S_{BB}|$. Let $C$ be the set of placements of a figure with $k$ edges that has a cycle; note that any such placement must be both locally bad and globally bad, since any graph with a cycle contains a broken-circuit under any edge-ordering. We thus have that $C \subset S_{BB}$. Because of this, Theorem \ref{convthm} implies that $|S_{GB}| + |S_{BB}| - |C| = |S_{BG}| + |S_{BB}| - |C|$, and therefore that $|S_{GB}| + |S_{BB}| = |S_{BG}| + |S_{BB}|$, as desired.
\end{proof}

We can now prove the following generalization of Theorem \ref{chrommain}.

\begin{theorem}
Fix $d$, and let $\chi_n(x)$ be the chromatic polynomial of $T^{d}_n$. Let $q_k(n^2)$ be the coefficient of $x^{n^2-k}$ in $\chi_n(x)$. Then $(-1)^kq_k(n^d)$ is eventually a polynomial in $n^d$, and this sequence of polynomials is of binomial-type. 
\end{theorem}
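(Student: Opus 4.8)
The plan is to combine Whitney's broken-circuit theorem (Theorem~\ref{whitthm}) with the binomial-type machinery for placement problems (Theorem~\ref{mainresult}), using Corollary~\ref{lgcor} to bridge the gap between the ``local'' notion of good figures (which is translation-invariant and hence amenable to Theorem~\ref{mainresult}) and the ``global'' notion that actually appears in Whitney's theorem. Concretely, by Whitney's theorem applied to $T^d_n$ with a natural edge ordering, $(-1)^k q_k(n^d)$ equals the number of $k$-edge subsets of the edge set of $T^d_n$ that contain no broken circuit. Any such subset is in particular cycle-free (a cycle always contains a broken circuit under any ordering), so $(-1)^k q_k(n^d)$ counts placements of cycle-free, globally good figures with $k$ total edges. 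By Corollary~\ref{lgcor}, this count equals the number of placements of cycle-free, \emph{locally} good figures with $k$ total edges on $T^d_n$.

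Next I would set up the intersection schema. Let $S$ be the set of all connected, cycle-free, locally good $d$-dimensional figures, with the weight of a figure taken to be its number of edges; since a connected figure with $e$ edges has $e+1$ vertices, it has bounded girth, so there are only finitely many figures of each weight, and $S$ is a valid weighted set. A placement of some sub-multiset of $S$ with total edge count $k$ on $T^d_n$, subject to no two figures overlapping, is exactly a cycle-free subgraph of $T^d_n$ with $k$ edges all of whose connected components are locally good; and because local goodness of a figure is exactly the statement that its embedding contains no broken circuit, a union of non-overlapping locally good cycle-free connected components is itself locally good (broken circuits, being subsets of cycles, live inside a single component of a forest). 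Hence the placement count produced by Theorem~\ref{mainresult} for this $S$ is precisely the number of placements of cycle-free locally good figures with $k$ edges. Theorem~\ref{mainresult} then immediately gives that this quantity is eventually a polynomial $p_k(n^d)$ in $n^d$ and that the $p_k$ form a sequence of binomial type.

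Chaining these equalities, $(-1)^k q_k(n^d) = p_k(n^d)$ for $n$ large, which is the desired conclusion; the case $d=2$ is Theorem~\ref{chrommain}. A couple of minor points need care. First, I should check the hypothesis of Whitney's theorem that $0 \le k \le |V| = n^d$: for fixed $k$ and $n$ large this is automatic. Second, Theorem~\ref{mainresult} as stated takes $S$ to be a set of connected figures and counts placements ``by total number of edges,'' which matches our setup; one should note that the empty placement correctly accounts for $q_0(n^d) = 1$ (the coefficient of $x^{n^d}$ in $\chi_n$ is $1$), and that figures with zero edges (isolated vertices) are excluded so that the weight function is positive — isolated vertices contribute nothing to an edge subset, so this is harmless.

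The main obstacle is making rigorous the claim that being locally good is preserved under disjoint union of components and, conversely, that every globally good (equivalently, via Corollary~\ref{lgcor}, locally good) cycle-free $k$-edge subgraph of $T^d_n$ decomposes as a non-overlapping placement of connected locally good figures counted exactly once by the schema — i.e.\ that the correspondence between ``$k$-edge forests with locally good components'' and ``placements tracked by $q_k$'' is a genuine bijection with the right multiplicities, including the subtle multiset/weight bookkeeping (Definitions~\ref{Osets}--\ref{alphadef}) already handled inside the proof of Theorem~\ref{mainform}. Since distinct components of a forest in $T^d_n$ are vertex-disjoint, there is never complete overlap of two components, so the weight factors $w_c$ and automorphism corrections reduce to the distinguishable case and no double-counting occurs; still, this is the step where I would be most careful to confirm that nothing is lost in translating the edge-subset count into the figure-placement count.
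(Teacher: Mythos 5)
Your proposal follows essentially the same route as the paper: apply Whitney's broken-circuit theorem with a natural edge ordering to express $(-1)^k q_k(n^d)$ as a count of globally good $k$-edge placements, use Corollary~\ref{lgcor} to convert this to a count of locally good placements, and then invoke Theorem~\ref{mainresult} with $S$ the set of locally good connected figures weighted by edge count. The extra care you take (noting that good figures are automatically cycle-free, that broken circuits are connected and hence live in a single component of a forest, and that vertex-disjoint components avoid the multiset subtleties) fills in details the paper leaves implicit but does not change the argument.
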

\begin{proof}
By Whitney's broken-circuit theorem, $q_k(n^d)$ is equal to $(-1)^{k}$ times the number of $k$-element subsets of $T^{d}_n$ which contain no broken-circuit. By choosing a natural edge-ordering for $T^{d}_n$ and using the notation above, $(-1)^{k}q_k(n^d)$ is just the number of ways to place a globally good figure with $k$ edges on $T^{d}_n$. By Corollary \ref{lgcor}, this is equal to the number of ways to place a locally good figure with $k$ edges on $T^{d}_n$. By choosing $S$ to be the set of locally good connected figures, it follows from Theorem \ref{mainresult} that this number is indeed a polynomial in $n^d$ and that these polynomials form a sequence of binomial-type, as desired.
\end{proof}

\section{Acknowledgements}

This research was performed as part of MIT's Undergraduate Research Opportunities Program (UROP) in the summer of 2011. The author would like to thank Prof. Richard Stanley for introducing him to this problem, mentoring him over the course of this project, and helping edit this paper.

\end{document}